\newtheorem{thm}{Theorem}[section]
\newtheorem{lem}{Lemma}[section]
\newtheorem{ex}{Example}[section]
\newtheorem{rmk}{Remark}[section]
\newtheorem{cnj}{Conjecture}[section]
\title{Spectra of eccentricity matrices of graphs}
\author{Iswar Mahato\thanks{Department of Mathematics, Indian Institute of Technology Kharagpur, Kharagpur 721 302, India. Email: iswarmahato02@gmail.com}\and R. Gurusamy\thanks{Department of Mathematics, Mepco Schlenk Engineering College, Sivakasi 626 005, Tamil Nadu, India. Email: sahama2010@gmail.com}  \and M. Rajesh Kannan\thanks{Department of Mathematics, Indian Institute of Technology Kharagpur, Kharagpur 721 302, India. Email: rajeshkannan@maths.iitkgp.ac.in, rajeshkannan1.m@gmail.com } \and S. Arockiaraj\thanks{Department of Mathematics, Government Arts and Science College, Sivakasi 626 124, Tamil Nadu, India. Email: psarockiaraj@gmail.com}
}
\date{\today}
\begin{document}
\maketitle
\baselineskip=0.25in

\begin{abstract}
The eccentricity matrix of a connected graph $G$  is obtained from the distance matrix of $G$ by retaining  the largest distances in each row and each column, and setting the remaining entries as $0$.  In this article, a conjecture about  the least eigenvalue of eccentricity matrices  of trees, presented in the  article  [Jianfeng Wang, Mei Lu, Francesco Belardo, Milan Randic. The anti-adjacency matrix of a graph: Eccentricity matrix. Discrete Applied Mathematics, 251: 299-309, 2018.],  is solved affirmatively. In addition to this, the spectra and the inertia of eccentricity matrices of various classes of graphs are investigated.
\end{abstract}

{\bf AMS Subject Classification(2010):} 05C12, 05C50.

\textbf{Keywords. } Adjacency matrix,  Distance matrix, Eccentricity matrix, Inertia, Least eigenvalue, Tree.

\section{Introduction}\label{sec1}

Let $G=(V(G),E(G))$ be a finite simple graph with vertex set $V(G)=\{v_1,v_2,\dots,v_n\}$ and edge set $E(G)=\{e_1, \dots , e_m \}$.  If two vertices $v_i$ and $v_j$ are adjacent, we write $v_i\sim v_j$,   and the edge between them is denoted by $e_{ij}$. The degree of the vertex $v_i$ is denoted by $d_i$. The  \textit{adjacency matrix} of $G$ is an $n \times n$ matrix, denoted by $A(G)$, whose rows and columns are indexed by the vertex set of the graph and the entries are defined by
$$A(G)_{ij}=\begin{cases}
1 &\text{if }v_i\sim v_j,\\
0 &\text{otherwise.}\end{cases}$$ The \emph{distance} between the vertices $v_i, v_j \in V(G)$, denoted $d(v_i,v_j)$, is defined to be the smallest value among the lengths (i.e., the number of edges) of the paths between the vertices $v_i$  and $v_j$.   The \emph{distance matrix} of a connected graph $G$, denoted by $D(G)$, or simply $D$, is the $n\times n$ matrix whose $(i,j)^{th}$-entry  is equal to $d(v_i,v_j)$, $i=1,2,\hdots,n$; $j=1,2,\hdots,n$.  The adjacency matrix and the distance matrix of a graph are  well studied matrix classes in the field of spectral graph theory. For more details about the study of classes of matrices associated with graphs, we refer to \cite{bapatbook, Brou, Cve1}. The \emph{eccentricity} $e(v_i)$ of the vertex $v_i$ is defined  as $e(v_i)=max \{d(v_i,v_j):  v_j \in V(G)\}$.  The \emph{eccentricity matrix of  a connected graph $G$}, denoted by $\epsilon(G)$, is defined as follows:
$${\epsilon(G)}_{ij}=
\begin{cases}
\text{$d(v_i,v_j)$} & \quad\text{if $d(v_i,v_j)=min\{e(v_i),e(v_j)\}$,}\\
\text{0} & \quad\text{otherwise.}
\end{cases}$$

The notion of eccentricity matrix was introduced and studied in \cite{ran1, ecc-main}. The eccentricity matrix is also known as $D_{\max}$-matrix in the literature\cite{ran1, ran2}. One of the main applications of eccentricity matrix is in the field of chemistry.  Also these matrices can be used in testing graphs for isomorphism. For more details about the applications of eccentricity matrices, we refer to \cite{ran1, ran2}.

Eccentricity matrix of a graph is opposite to its adjacency matrix in the following sense.  Eccentricity matrix is obtained from the distance matrix by retaining only the largest distances in each row. Adjacency matrix is obtained from the distance matrix by retaining only the smallest non-zero distances in each row  \cite{ecc-main}. The eccentricity matrix, unlike the adjacency matrix and the distance matrix,  of a connected graph need not be irreducible. The eccentricity matrix of a complete bipartite graph is reducible. In \cite{ecc-main}, the authors asked  the following question: For which connected graphs the eccentricity matrix is irreducible? In the same article, the authors  established that the eccentricity matrix of a tree is irreducible.
In this article, we provide an alternate proof for this result (Theorem \ref{alt-pf}).  

Let $\epsilon_1, \epsilon_2, \hdots \epsilon_n$ denote the eigenvalues of the matrix $\epsilon(G)$. Since, the matrix $\epsilon(G)$ is symmetric, all the $\epsilon$-eigenvalues of $G$ are real. If $\epsilon_1>\epsilon_2>\hdots >\epsilon_k$ be all the distinct $\epsilon$-eigenvalues of $G$, then the $\epsilon$-spectrum of $G$, denoted by $spec_{\epsilon}(G)$,  is denoted by
\[ spec_{\epsilon}(G)=
  \left\{ {\begin{array}{cccc}
   \epsilon_1 &\epsilon_2  &\hdots &\epsilon_k \\
   m_1& m_2& \hdots &m_k\\
  \end{array} } \right\},
  \]
where $m_i$ is the algebraic multiplicity of the eigenvalue $\epsilon_i$ for $1\leq i\leq k$. In \cite{ecc-main}, the authors made the following conjecture:
\begin{cnj}\cite[Conjecture 2]{ecc-main}
 Let $T$ be a tree on $n$ vertices, with $n \geq 3$. Then, $\epsilon_n(T) \leq -2$, and  equality holds   if and only if $T$ is the star.
\end{cnj}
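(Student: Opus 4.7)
The plan is to combine Cauchy interlacing with an explicit spectral computation for the star.

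First, I will show that for any tree $T$ on $n \geq 3$ vertices, if $u, v$ are two vertices realising the diameter $d = \mathrm{diam}(T)$, then the eccentricity matrix entry satisfies $\epsilon(T)_{uv} = d$. Indeed, for any vertex $x$ one has $e(x) \leq d$, while $e(u) \geq d(u,v) = d$ and likewise $e(v) \geq d$, so $e(u) = e(v) = d$ and hence $d(u,v) = \min\{e(u), e(v)\}$. Since $n \geq 3$ forces $d \geq 2$, we have $\epsilon(T)_{uv} = d \geq 2$.

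Next, I apply Cauchy interlacing to the $2 \times 2$ principal submatrix of $\epsilon(T)$ indexed by $\{u,v\}$, which equals $\left(\begin{smallmatrix} 0 & d \\ d & 0 \end{smallmatrix}\right)$ and has eigenvalues $\pm d$. Interlacing then forces $\epsilon_n(T) \leq -d \leq -2$, establishing the stated inequality.

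For the equality case, if $\epsilon_n(T) = -2$ then necessarily $d = 2$, because $d \geq 3$ would yield $\epsilon_n(T) \leq -3$. A tree of diameter $2$ must be a star: if $u$--$c$--$v$ is a diametral path, any further vertex $w$ must be adjacent to the centre $c$, else a path of length at least $3$ would appear. It remains to verify that $\epsilon_n(K_{1,n-1}) = -2$. Ordering the centre first, the eccentricity matrix has the block form $\epsilon(K_{1,n-1}) = \left(\begin{smallmatrix} 0 & \mathbf{1}^T \\ \mathbf{1} & 2(J-I) \end{smallmatrix}\right)$, with $J, I$ of order $n-1$. Any vector $(0, x)^T$ with $x \perp \mathbf{1}$ is an eigenvector with eigenvalue $-2$, yielding multiplicity $n-2$. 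The two remaining eigenvalues are the roots of $\lambda^2 - 2(n-2)\lambda - (n-1) = 0$, namely $(n-2) \pm \sqrt{(n-2)^2 + (n-1)}$; the smaller root strictly exceeds $-2$ precisely when $n^2 > n^2 - 3n + 3$, which holds for all $n \geq 3$.

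There is no serious obstacle here: the whole argument rests on the observation that any pair of diametral vertices of $T$ sits inside an antidiagonal $2 \times 2$ block of $\epsilon(T)$ whose smallest eigenvalue equals $-\mathrm{diam}(T)$. The only non-trivial ingredient is the explicit spectrum of the star, which is a routine block-matrix calculation.
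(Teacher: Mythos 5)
Your proposal is correct and follows essentially the same route as the paper: both arguments rest on the observation that a diametral pair $u,v$ satisfies $e(u)=e(v)=d$, so $\epsilon(T)$ contains the principal submatrix $\left(\begin{smallmatrix}0&d\\ d&0\end{smallmatrix}\right)$, and Cauchy interlacing then gives $\epsilon_n(T)\leq -d$, with $d\geq 3$ for non-stars and an explicit spectral computation for the star (your quadratic $\lambda^2-2(n-2)\lambda-(n-1)$ agrees with the paper's, since $(n-2)^2+(n-1)=n^2-3n+3$). The only cosmetic difference is that you derive the inequality uniformly from $d\geq 2$ and then isolate the equality case via $d=2$, whereas the paper splits into star and non-star cases from the outset.
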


In Theorem \ref{sol-conj}, we provide an affirmative answer to this conjecture. In \cite{ecc-main}, the authors computed the $\epsilon$-spectra of some classes of graphs viz., cycles, $r$-regular graphs with diameter $2$, complete product two graphs, and so on. In this article, we compute the $\epsilon$-spectra of corona of a graph and a complete graph. This enables one to  construct infinitely many pairs of non-isomorphic graphs with same $\epsilon$-spectra. Also, we compute the spectra and the inertia of eccentricity matrices associated with various other classes of graphs.

This  article is organized as follows: In section \ref{sec2}, we collect the definitions and preliminary results needed. In section \ref{sec3}, we establish a proof of the conjecture about the least $\epsilon$-eigenvalue of a tree. Also, we provide an alternate proof of the fact that the eccentricity matrix of a tree is irreducible. In section \ref{sec4}, we compute $\epsilon$-spectra of various classes of graphs. In section \ref{sec5}, we obtain the inertia of eccentricity matrices associated with various classes of graphs.

\section{Definitions, notation and preliminary results}\label{sec2}
Let ${\mathbb {R}^{n\times n}}$ denote the set of all $n\times n$ matrices with real entries. For $A\in {\mathbb {R}^{n\times n}}$, let $A^T$, $\det (A)$, $\det(\lambda I-A)$  and $\sigma(A)$ denote  transpose, determinant, characteristic polynomial and  spectrum (set of all eigenvalues)  of $A$, respectively. Let $J_{n \times n}$ or simply $J_n$ denotes the $n\times n$ matrix with all entries equal to $1$,  and $I_n$ denotes the $n\times n$ identity matrix.

Let $A$ be an $n\times n$ matrix partitioned as
$   A=
\left[ {\begin{array}{cc}
	A_{11} & A_{12} \\
	A_{21} & A_{22} \\
	\end{array} } \right]$,
where $A_{11}$ and $A_{22}$ are square matrices. If $A_{11}$ is nonsingular, then the Schur complement of $A_{11}$ in $A$ is defined as $A_{22}-A_{21}{A_{11}^{-1}}A_{12}$. For Schur complements,  we have $ \det A= (\det A_{11})\det(A_{22}-A_{21}{A_{11}^{-1}}A_{12})$.
Similarly, if $A_{22}$ is nonsingular, then the Schur complement of $A_{22}$ in A is $A_{11}-A_{12}{A_{22}^{-1}}A_{21}$, and we have $\det A= (\det A_{22})\det(A_{11}-A_{12}{A_{22}^{-1}}A_{21}).$

The following result is about the eigenvalues of block matrices.
\begin{lem}\cite{circ-davis}\cite[Lemma 3.9]{ecc-main}\label{b0-b1-lemma}
	Let $ B=
	\left[ {\begin{array}{cc}
		B_0 & B_{1} \\
		B_1 & B_0 \\
		\end{array} } \right]$ be a symmetric $2 \times 2$ block matrix with $B_{0}$ and $B_1$ are square matrices of same order. Then, the spectrum of $B$ is  the union of the spectra of $B_0+B_1$ and $B_0-B_1.$
\end{lem}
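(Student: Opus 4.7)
The plan is to diagonalize the block structure by an orthogonal similarity that is independent of $B_0$ and $B_1$. Specifically, I would introduce
\[
P = \frac{1}{\sqrt{2}}\begin{pmatrix} I & I \\ I & -I \end{pmatrix},
\]
where $I$ is the identity matrix of the same order as $B_0$. A direct check shows $P^T P = I$, so $P$ is orthogonal, and in particular similarity by $P$ preserves the spectrum.

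Next, I would compute $P^T B P$ by two block multiplications. Because the off-diagonal and diagonal blocks of $B$ are equal in pairs, the cross terms cancel and one obtains
\[
P^T B P = \begin{pmatrix} B_0 + B_1 & 0 \\ 0 & B_0 - B_1 \end{pmatrix}.
\]
Since a block-diagonal matrix has spectrum equal to the union (with multiplicities) of the spectra of its diagonal blocks, the lemma follows at once.

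An equivalent, eigenvector-based route would be to observe that if $B_0 v + B_1 v = \lambda v$ then $(v,v)^T$ is a $\lambda$-eigenvector of $B$, and if $B_0 w - B_1 w = \mu w$ then $(w,-w)^T$ is a $\mu$-eigenvector of $B$; a dimension count then shows all eigenvalues arise this way. I would prefer the similarity formulation because it makes multiplicities automatic and avoids an explicit counting argument.

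There is no real obstacle here; the only point to be careful about is making sure the block arithmetic is carried out in the right order (since $B_0$ and $B_1$ are matrices, not scalars), but the symmetry of the arrangement of blocks in $B$ means no noncommutativity issue ever arises in the computation of $P^T B P$.
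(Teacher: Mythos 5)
Your proof is correct: the conjugation by $P=\frac{1}{\sqrt{2}}\left[\begin{smallmatrix} I & I\\ I & -I\end{smallmatrix}\right]$ does block-diagonalize $B$ into $B_0+B_1$ and $B_0-B_1$, and this is the standard argument. The paper itself gives no proof of this lemma (it is quoted from the cited references), so there is nothing to compare against; your similarity formulation is a complete and clean justification, and as you note it handles multiplicities automatically.
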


The following result will be used in the proof of lemma \ref{lem:A}.
\begin{lem}\cite[Theorem 3.3 ]{ecc-main}\label{lambda}
	Let $B$ be a square matrix of order $n$. If each column sum of $B$ is equal to some eigenvalue (say $\alpha$) of $B$, then $J_{1\times n}{(\lambda I-B)}^{-1}J_{n\times 1}=\frac{n}{\lambda-\alpha}.$
\end{lem}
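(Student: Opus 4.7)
The plan is to recognize that the column-sum hypothesis is equivalent to a left-eigenvector condition, and then everything follows by a short algebraic manipulation. Specifically, saying that every column sum of $B$ equals $\alpha$ is the same as saying $J_{1 \times n} B = \alpha J_{1 \times n}$, i.e.\ that the all-ones row vector is a left eigenvector of $B$ for the eigenvalue $\alpha$. Under this assumption I would prove the identity
$$J_{1 \times n}(\lambda I - B)^{-1} = \frac{1}{\lambda - \alpha} J_{1 \times n}$$
for any $\lambda \notin \sigma(B) \cup \{\alpha\}$, and then post-multiply by $J_{n \times 1}$ using $J_{1 \times n} J_{n \times 1} = n$ to obtain the claimed formula $n/(\lambda - \alpha)$.

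To establish the intermediate identity, I would start from $J_{1 \times n} B = \alpha J_{1 \times n}$ and rewrite it as $J_{1 \times n}(\lambda I - B) = (\lambda - \alpha) J_{1 \times n}$. Since $\lambda I - B$ is invertible (this is implicit in the statement, as otherwise the left-hand side is undefined), multiplying both sides on the right by $(\lambda I - B)^{-1}$ yields the desired expression, and one more post-multiplication by $J_{n \times 1}$ finishes the argument.

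There is really no genuine obstacle here; the only point worth flagging is that the statement tacitly assumes $\lambda \notin \sigma(B)$ so that $(\lambda I - B)^{-1}$ makes sense, and consequently $\lambda \neq \alpha$ so that division by $\lambda - \alpha$ is legitimate. With these caveats noted, the proof is a two-line calculation and I would present it essentially as above, with the conceptual content lying entirely in the reinterpretation of ``constant column sums'' as ``$J_{1 \times n}$ is a left eigenvector.''
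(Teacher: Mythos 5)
Your proof is correct. The paper states this lemma without proof, importing it from the cited reference, so there is nothing internal to compare against; your argument --- reading the constant-column-sum hypothesis as $J_{1\times n}B=\alpha J_{1\times n}$, deducing $J_{1\times n}(\lambda I-B)=(\lambda-\alpha)J_{1\times n}$, and right-multiplying by $(\lambda I-B)^{-1}$ and then by $J_{n\times 1}$ --- is the standard one, and your remark that invertibility of $\lambda I-B$ together with $\alpha\in\sigma(B)$ forces $\lambda\neq\alpha$ correctly handles the only delicate point.
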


The following result is about the spectrum of a special kind of block matrix. For the sake completeness we include a proof here.
\begin{lem}\label{lem:A}
	Let $A$ be an $(n+1)\times (n+1)$ matrix of the form \[
	A=
	\left[ {\begin{array}{cc}
		0& 2J_{1\times n} \\
		2J_{n \times 1}& 3J_{n} \\
		\end{array} } \right].
	\]
	Then \[
	\sigma(A)=
	\left\{ {\begin{array}{cc}
		0 & \frac{3n\pm \sqrt {9n^2+16n}}{2}  \\
		(n-1) &1\\
		\end{array} } \right\}.
	\]
\end{lem}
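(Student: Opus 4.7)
The plan is to compute the characteristic polynomial $\det(\lambda I_{n+1}-A)$ by Schur complement with respect to the bottom-right $n\times n$ block, and then read off the spectrum from its factorization. Write
\[
\lambda I_{n+1}-A=\left[\begin{array}{cc} \lambda & -2J_{1\times n}\\ -2J_{n\times 1} & \lambda I_n-3J_n\end{array}\right].
\]
For $\lambda\neq 0$ and $\lambda\neq 3n$, the block $\lambda I_n-3J_n$ is nonsingular, so the Schur complement formula gives
\[
\det(\lambda I_{n+1}-A)=\det(\lambda I_n-3J_n)\cdot\Bigl(\lambda-4\,J_{1\times n}(\lambda I_n-3J_n)^{-1}J_{n\times 1}\Bigr).
\]

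Next, I would evaluate the two ingredients. The matrix $3J_n$ has eigenvalues $3n$ (simple) and $0$ (with multiplicity $n-1$), hence $\det(\lambda I_n-3J_n)=\lambda^{n-1}(\lambda-3n)$. For the scalar term, every column sum of $3J_n$ equals $3n$, which is an eigenvalue of $3J_n$, so Lemma \ref{lambda} applies and yields $J_{1\times n}(\lambda I_n-3J_n)^{-1}J_{n\times 1}=\dfrac{n}{\lambda-3n}$. Plugging in and simplifying the result telescopes nicely:
\[
\det(\lambda I_{n+1}-A)=\lambda^{n-1}(\lambda-3n)\left(\lambda-\frac{4n}{\lambda-3n}\right)=\lambda^{n-1}\bigl(\lambda^2-3n\lambda-4n\bigr).
\]
Since both sides are polynomials in $\lambda$, this identity is valid for all $\lambda$, covering the excluded values by continuity.

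Finally, I would solve $\lambda^2-3n\lambda-4n=0$ by the quadratic formula to obtain the two simple eigenvalues $\dfrac{3n\pm\sqrt{9n^2+16n}}{2}$, while the factor $\lambda^{n-1}$ contributes the eigenvalue $0$ with multiplicity $n-1$. This reproduces the spectrum stated. I do not foresee a serious obstacle: the only minor care is to note that the Schur complement is being used as a formal identity on polynomials and then restricted to the two excluded values by continuity. As an alternative verification, one could directly exhibit eigenvectors of the form $(0,v)^{T}$ with $v\perp\mathbf{1}$ (yielding the $n-1$ zero eigenvalues) and $(x,y\mathbf{1})^{T}$ (reducing the remaining spectrum to the same quadratic), but the Schur-complement route is cleaner and reuses Lemma \ref{lambda} as the paper's setup suggests.
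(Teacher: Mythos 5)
Your proposal is correct and follows essentially the same route as the paper: Schur complement with respect to the block $\lambda I_n-3J_n$, evaluation of the scalar term via Lemma \ref{lambda}, and factorization of the resulting polynomial $\lambda^{n-1}(\lambda^2-3n\lambda-4n)$. The only difference is that you explicitly justify the excluded values $\lambda\in\{0,3n\}$ by polynomial continuity, a point the paper leaves implicit.
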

\begin{proof} The characteristic polynomial of A is given by
	\[
	\det(\lambda I_{n+1}-A)=
	\det\left[ {\begin{array}{cc}
		\lambda & -2J_{1\times n} \\
		-2J_{n \times 1}& \lambda I_{n}-3J_{n} \\
		\end{array} } \right].
	\]
	Then,  by Schur complement  formula and lemma \ref{lambda} we have
	\begin{eqnarray*}
		\det(\lambda I_{n+1}-A)
		&=& \det( \lambda I_{n}-3J_{n})\det\big[\lambda-4J_{1\times n}(\lambda I_{n}-3J_{n})^{-1}J_{1\times n}\big]\\
		&=&(\lambda)^{n-1}\big(\lambda-3n\big)\det\Big[\lambda-\frac{4n}{\lambda-3n}\Big]\\
		&=& (\lambda)^{n-1}\big[{\lambda}^2-3n\lambda-4n\big].
	\end{eqnarray*}
	This completes the proof.\end{proof}

If $A$ and $B$ are matrices of order $m \times n $ and $p \times q$, respectively, then the Kronecker product of the matrices $A$ and $B$, denoted $A\otimes B$, is the $mp\times nq$ block matrix $[a_{ij}B]$.

\begin{lem}\cite{hor-john-topics}\label{kron-eigen}
	Let $A\in {\mathbb {R}^{n\times n}}$ and $B\in {\mathbb {R}^{m\times m}}$. If $\sigma(A)=\{\lambda_1,\hdots,\lambda_n\}$ and $\sigma(B)=\{\mu_1,\hdots,\mu_m\}$ are the spectra of $A$ and $B$, respectively, then $\sigma(A\otimes B)=\{\lambda_i\mu_j:i=1,\hdots,n;~ j=1,\hdots,m\}.$
\end{lem}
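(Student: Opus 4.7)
The plan is to prove this classical fact via simultaneous Schur triangularization together with the mixed-product property of the Kronecker product. The key algebraic identity is $(X\otimes Y)(Z\otimes W)=(XZ)\otimes(YW)$ whenever the block sizes are compatible, and the key structural fact is that the Kronecker product of two upper triangular matrices is again upper triangular, with diagonal entries given by pairwise products of the original diagonal entries.

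First I would invoke Schur's triangularization theorem to write $A=UT_AU^{*}$ and $B=VT_BV^{*}$, where $U\in\mathbb{C}^{n\times n}$ and $V\in\mathbb{C}^{m\times m}$ are unitary and $T_A$, $T_B$ are upper triangular with diagonal entries $\lambda_1,\dots,\lambda_n$ and $\mu_1,\dots,\mu_m$, respectively (listed with algebraic multiplicity). Then, using the mixed-product property together with $(U\otimes V)^{*}=U^{*}\otimes V^{*}$ and the fact that $U\otimes V$ is unitary, I would compute
\[
(U\otimes V)(T_A\otimes T_B)(U\otimes V)^{*}=(UT_AU^{*})\otimes(VT_BV^{*})=A\otimes B,
\]
so that $A\otimes B$ is unitarily similar to $T_A\otimes T_B$ and hence shares its spectrum, with multiplicities.

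Next I would observe that $T_A\otimes T_B$, being the $nm\times nm$ block matrix whose $(i,j)$-block equals $(T_A)_{ij}T_B$, is block upper triangular (since $T_A$ is upper triangular) and each diagonal block $(T_A)_{ii}T_B=\lambda_i T_B$ is itself upper triangular. Consequently $T_A\otimes T_B$ is upper triangular, and reading off its diagonal yields the $nm$ numbers $\lambda_i\mu_j$ for $1\le i\le n$, $1\le j\le m$, each appearing according to the product of the multiplicities inherited from $T_A$ and $T_B$. Since the eigenvalues of a triangular matrix are exactly its diagonal entries, we conclude $\sigma(A\otimes B)=\{\lambda_i\mu_j:1\le i\le n,\,1\le j\le m\}$, as required.

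The only real subtlety is the Kronecker-product bookkeeping: one must verify that the product of block-triangular matrices of compatible partitioning is block-triangular, and then that each diagonal block $\lambda_i T_B$ is itself upper triangular, so that the whole matrix $T_A\otimes T_B$ is literally upper triangular in the scalar sense. Beyond this step everything reduces to the standard mixed-product identity and Schur's theorem, both of which are available in any standard reference such as \cite{hor-john-topics}, so no further work is needed.
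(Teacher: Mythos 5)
Your proof is correct and complete: Schur triangularization of $A$ and $B$, the mixed-product identity to show $A\otimes B$ is unitarily similar to $T_A\otimes T_B$, and the observation that the latter is upper triangular with diagonal entries $\lambda_i\mu_j$ together give exactly the claimed spectrum with multiplicities. The paper offers no proof of this lemma at all --- it is imported by citation from \cite{hor-john-topics} --- and your argument is precisely the standard one found in that reference, so there is nothing further to compare.
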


The following theorem is known as interlacing theorem.
\begin{thm}\cite{hor-john-mat}
	Suppose $A \in{\mathbb {R}^{n\times n}}$ is symmetric. Let $B\in {\mathbb{R}^{m\times m}}$
	with $m < n$ be a principal submatrix of $A$ (submatrix whose rows and columns are indexed by the same index set $\{i_1, \ldots, i_m\}$, for some $m$). Suppose $A$ has eigenvalues $\lambda_1 \leq \hdots \leq \lambda_n$
	and B has eigenvalues $\beta_1 \leq  \hdots \leq  \beta_m$. Then,
	$\lambda_k \leq \beta_k \leq  \lambda_{k+n-m}$ for $k = 1,\hdots , m$,
	and if $ m = n-1$, then 
	$\lambda_1\leq \beta_1 \leq \lambda_2 \leq \beta_2 \leq \hdots \leq \beta_{n-1} \leq \lambda_n$.
\end{thm}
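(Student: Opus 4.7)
The plan is to prove both inequalities by invoking the Courant--Fischer min-max characterization of the eigenvalues of a real symmetric matrix. For any symmetric $M \in {\mathbb {R}^{N\times N}}$ with eigenvalues $\mu_1 \leq \cdots \leq \mu_N$, one has for each $k \in \{1,\ldots,N\}$
$$\mu_k \;=\; \min_{\dim V = k}\; \max_{0 \neq x \in V} \frac{x^T M x}{x^T x} \;=\; \max_{\dim V = N-k+1}\; \min_{0 \neq x \in V} \frac{x^T M x}{x^T x}.$$
I would take both forms as standard and use them to bracket each $\beta_k$ between two eigenvalues of $A$.

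Let $\{i_1,\ldots,i_m\} \subseteq \{1,\ldots,n\}$ be the index set defining $B$, and let $S \subseteq {\mathbb {R}^{n}}$ be the $m$-dimensional coordinate subspace spanned by the standard basis vectors $e_{i_1},\ldots,e_{i_m}$. The key observation is that the natural isometry $\iota : {\mathbb {R}^{m}} \to S$ sending the $j$-th standard basis vector to $e_{i_j}$ satisfies $(\iota y)^T A (\iota y) = y^T B y$ for all $y \in {\mathbb {R}^{m}}$; hence the Rayleigh quotients of $A$ on vectors in $S$ agree with the Rayleigh quotients of $B$.

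For the lower bound $\lambda_k \leq \beta_k$, I would use the min-form for $\lambda_k$: choose a $k$-dimensional subspace $W \subseteq {\mathbb {R}^{m}}$ attaining $\beta_k = \max_{0 \neq y \in W} y^T B y / y^T y$, and set $V = \iota(W)$. Then $\dim V = k$ and, by the observation above, $\max_{0 \neq x \in V} x^T A x / x^T x = \beta_k$, so taking the min over admissible $V$ yields $\lambda_k \leq \beta_k$. For the upper bound $\beta_k \leq \lambda_{k+n-m}$, I would apply the min-form for $\lambda_{k+n-m}$: for any $V \subseteq {\mathbb {R}^{n}}$ with $\dim V = k+n-m$, the elementary count $\dim(V \cap S) \geq \dim V + \dim S - n = k$ combined with the Rayleigh-quotient identity forces $\max_{0 \neq x \in V} x^T A x / x^T x \geq \max_{0 \neq y \in W'} y^T B y / y^T y$ for some $W' \subseteq {\mathbb {R}^{m}}$ with $\dim W' \geq k$, and the right-hand side is at least $\beta_k$ by the min-max definition of $\beta_k$.

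The refined statement for $m = n-1$ is an immediate specialization: the main inequalities reduce to $\lambda_k \leq \beta_k \leq \lambda_{k+1}$ for $k = 1,\ldots,n-1$, which chain together as stated. The main obstacle, as in most interlacing proofs, is simply keeping the dimension bookkeeping straight; the critical inequality is $\dim(V \cap S) \geq \dim V + \dim S - n$, after which everything else is routine.
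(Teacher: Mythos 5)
Your proof is correct, but note that the paper offers no proof to compare it against: this is the classical Cauchy interlacing theorem, stated with a citation to Horn and Johnson and used as a black box (in Theorem \ref{sol-conj} and in the inertia computations of Section \ref{sec5}). Your argument via the Courant--Fischer min-max characterization is the standard textbook proof, and the details check out. The lower bound $\lambda_k \leq \beta_k$ follows exactly as you say by pushing forward an optimal $k$-dimensional subspace for $B$ into the coordinate subspace $S$ and comparing with the minimum defining $\lambda_k$; the upper bound rests on the dimension count $\dim(V\cap S)\geq \dim V + \dim S - n = k$ for any $V$ of dimension $k+n-m$, together with the fact that the maximum of the Rayleigh quotient of $B$ over any subspace of dimension at least $k$ is at least $\beta_k$ (since such a subspace contains a $k$-dimensional one, and $\beta_k$ is the minimum over those). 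Both steps are sound, and the $m=n-1$ case is indeed just the specialization $\lambda_k \leq \beta_k \leq \lambda_{k+1}$ chained together. The only cosmetic remark: in the upper-bound step you should say explicitly that $\max_{0\neq x\in V} x^TAx/x^Tx \geq \max_{0\neq x\in V\cap S} x^TAx/x^Tx$ because $V\cap S\subseteq V$, which is the bridge between your two displayed quantities; you assert the conclusion but skip naming this one-line inclusion.
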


The inertia of  a symmetric matrix $A$ is the triple $\big(n_{+}(A),n_{-}(A),n_{0}(A)\big)$, where $n_{+}(A), n_{-}(A)$ and $n_{0}(A)$ denote the number of positive, negative and zero eigenvalues of $A$, respectively.

An $n \times n$ nonnegative matrix $A $ is said to be \emph{reducible} if there exists an $n \times n$ permutation matrix $Q$ such that $QAQ^T =
\begin{pmatrix}
A_{11} & A_{12}  \\
0  & A_{22}
\end{pmatrix}
$, where $A_{11}$ is a $r \times r$ sub matrix  with $1 \leq r < n$. If no such
permutation matrix $Q$ exists, then $A$ is said to be \emph{irreducible}.
Given an $n \times n$ nonnegative symmetric matrix A, associate a graph $G(A)$ on $n$ vertices, say $\{u_1, \dots , u_n\}$ such that there is an edge between the vertices $u_i$ and $u_j$ in $G(A)$ if and only if the $(i,j)^{th}$ entry of $A$ is nonzero.
An  $n \times n$ symmetric nonnegative matrix  $A$ is  irreducible if and only if $G(A)$ is  connected.

Now, let us collect some of the definitions related to  graphs which will be used later. For graph $G$, the complement of $G$, denoted by $\overline{G}$, is a graph with vertices are same as that of $G$, and two vertices $v_i$ and $v_j$ are adjacent in $\overline{G}$ if and only if $v_i$ and $v_j$ are not adjacent in $G$.    For two vertices $v_i, v_j \in V(G)$, let $P(v_i, v_j)$ denote the path joining the vertices $v_i$ and $v_j$.  Let $K_n$ and $P_n$ denote the complete graph and the path on $n$ vertices, respectively. A graph $G$ is said to be $r$-regular, if degree of each vertex of $G$ is $r$. The star graph on $n$ vertices, denoted by $K_{1,n-1}$,  is the tree with one vertex having degree $n-1$. The wheel graph $W_{n+1}$ on $n+1$ vertices is the graph that contains a cycle $C_n$ of length $n$, and a vertex $v$ not in the cycle such that $v$ is adjacent to every  vertex in $C_n$. The $n$-barbell graph $B_{n,n}$ is the graph obtained by connecting two copies of complete graph $K_n$ by a bridge (cut edge).  The cocktail-party graph $CP(n)$ is $(2n-2)$ regular graph with $2n$ vertices obtained by removing $n$ independent edges from $K_{2n}$. Let $G_1\cup G_2$ denote the disjoint union of graphs $G_1$ and $G_2$. Then the complete product $G_1 \vee G_2$ is the graph obtained from $G_1\cup G_2$ by joining every vertex of $G_1$ with every vertex of $G_2$. The $(m,n)$-lollipop graph is the graph obtained by joining a complete graph $K_m$ with a path graph $P_n$ by a bridge. Usually the $(m,n)$-lollipop graph is denoted by $L_{m,n}$.   Let $G$ and $H$ be two graphs on disjoint vertex sets of order $n$ and $m$, respectively. Let $\{v_1, \ldots, v_n\}$ be the vertex set of $G$. The corona $G\circ H$ of $G$ and $H$ is defined as the graph obtained by taking one copy of $G$ and $n$ disjoint copies of $H$, say $H_1, \ldots, H_n$,  and joining the  vertex $v_i$ of $G$ to every vertex in $H_i$, the $i^{th}$ copy of $H$ \cite{har-coro}.   For more details about graphs, we refer to \cite{bon-mur-book, har-book}.

\section{Proof of the conjecture}\label{sec3}

In this section, first we shall compute the $\epsilon$-spectra of star on $n$-vertices. In Theorem \ref{sol-conj}, we provide a solution to the conjecture \cite[Conjecture 2]{ecc-main}.  Finally, we establish an alternate proof for the fact that, if  $T$ is a tree, then $\epsilon(T)$ is irreducible.

\begin{thm}\label{star-eig}
	Let $K_{1,n-1}$ be the star graph on $n$ vertices. Then,  $ \det(\epsilon(K_{1,n-1}))=(-1)^{n-1}(n-1)2^{n-2}$, and
	$ spec_{\epsilon}(K_{1,n-1})=
	\left\{ {\begin{array}{cc}
		(n-2)\pm \sqrt{n^2-3n+3}& -2 \\
		1 & n-2\\
		\end{array} } \right\}.$ Further, if $n \geq 3$, then the least eigenvalue of $\epsilon(K_{1,n-1})$, $\epsilon_n(K_{1,n-1})$ is -2.
\end{thm}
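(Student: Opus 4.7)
The plan is to write the eccentricity matrix in block form separating the center from the $n-1$ leaves, exploit the symmetry among the leaves to extract an $(n-2)$-dimensional eigenspace for the eigenvalue $-2$, and then finish the remaining spectrum with a $2\times 2$ reduction, concluding with a short inequality for the least-eigenvalue claim.

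First, I would record the structure of $\epsilon(K_{1,n-1})$. In $K_{1,n-1}$ the center has eccentricity $1$ while each leaf has eccentricity $2$, and since $d(\text{center},\text{leaf})=1=\min\{1,2\}$ and $d(\text{leaf},\text{leaf})=2=\min\{2,2\}$, every distance is retained. With the center indexed first,
\[
\epsilon(K_{1,n-1}) = \begin{pmatrix} 0 & \mathbf{1}_{n-1}^T \\ \mathbf{1}_{n-1} & 2(J_{n-1}-I_{n-1}) \end{pmatrix}.
\]

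Next, I would produce $n-2$ eigenvectors with eigenvalue $-2$ directly: for any $x\in\mathbb{R}^{n-1}$ with $\mathbf{1}_{n-1}^Tx=0$ the augmented vector $(0,x)^T$ is an eigenvector, because $(J_{n-1}-I_{n-1})x=-x$ and $\mathbf{1}_{n-1}^Tx=0$. The orthogonal complement of this eigenspace is the two-dimensional invariant subspace spanned by $e_1$ and $(0,\mathbf{1}_{n-1}/\sqrt{n-1})^T$, and the restriction of $\epsilon(K_{1,n-1})$ to this orthonormal basis is the $2\times 2$ matrix
\[
\begin{pmatrix} 0 & \sqrt{n-1} \\ \sqrt{n-1} & 2(n-2) \end{pmatrix},
\]
whose characteristic polynomial $\lambda^2-(2n-4)\lambda-(n-1)$ gives the two remaining eigenvalues $(n-2)\pm\sqrt{n^2-3n+3}$. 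This assembles the claimed $\mathrm{spec}_\epsilon(K_{1,n-1})$, and the determinant then follows at once as the product of all eigenvalues, $(-2)^{n-2}\cdot(-(n-1)) = (-1)^{n-1}(n-1)2^{n-2}$.

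For the final assertion with $n\ge 3$, I would just compare $-2$ with $(n-2)-\sqrt{n^2-3n+3}$: squaring the equivalent inequality $n>\sqrt{n^2-3n+3}$ reduces it to $3n>3$, which holds. No step here is a serious obstacle; the only bookkeeping point is using an orthonormal basis in the $2\times 2$ reduction so that its eigenvalues genuinely coincide with those of $\epsilon(K_{1,n-1})$. An equivalent alternative, closer in spirit to the proof of Lemma~\ref{lem:A}, would be to expand $\det(\lambda I-\epsilon(K_{1,n-1}))$ by a Schur complement against the block $2(J_{n-1}-I_{n-1})$ and invoke Lemma~\ref{lambda}; either route produces the same polynomial.
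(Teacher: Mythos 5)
Your proposal is correct, but it takes a different route from the paper. The paper computes the determinant and the characteristic polynomial of $\epsilon(K_{1,n-1})$ by two Schur complement reductions (against the block $2(J_{n-1}-I_{n-1})$ for the determinant and against the $1\times 1$ block $-\lambda$ for the characteristic polynomial), obtaining $\big(\lambda^2-(2n-4)\lambda-(n-1)\big)(-2-\lambda)^{n-2}$ and reading off the spectrum; you instead exhibit the eigenvectors directly, using the leaf symmetry to produce the $(n-2)$-dimensional eigenspace $\{(0,x): \mathbf{1}^Tx=0\}$ for $-2$ and then restricting to the orthogonal invariant plane spanned by $e_1$ and $(0,\mathbf{1}/\sqrt{n-1})$ to get the $2\times 2$ matrix $\left[\begin{smallmatrix}0 & \sqrt{n-1}\\ \sqrt{n-1} & 2(n-2)\end{smallmatrix}\right]$, whose characteristic polynomial matches the paper's quadratic factor. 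Your approach buys explicit eigenvectors and avoids any appeal to Lemma \ref{lambda} or the Schur determinant identity, and it makes the multiplicity of $-2$ structurally transparent rather than a by-product of a determinant factorization; the paper's approach is more mechanical but generalizes more readily to blocks without this symmetry (as in Lemma \ref{lem:A}). Your observation that the determinant is then just the product of the eigenvalues also streamlines the paper's separate Schur computation of $\det(\epsilon(K_{1,n-1}))$. The concluding inequality for the least eigenvalue is handled essentially identically in both (reducing $(n-2)-\sqrt{n^2-3n+3}\ge -2$ to $\sqrt{n^2-3n+3}\le n$, i.e.\ $3n\ge 3$); the one bookkeeping point you flag, using an orthonormal basis for the $2\times 2$ reduction, is indeed the only thing that needs care, and you handle it correctly.
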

\begin{proof}
	Let $K_{1,n-1}$ be the star graph on $n$ vertices  $\{v_1,v_2,\hdots,v_n\}$, where $v_1$ is the vertex of degree $(n-1)$. Then,
	\[
	\epsilon(K_{1,n-1})=
	\left[ {\begin{array}{cc}
		0& J_{1\times n-1} \\
		J_{n-1 \times 1}& 2(J_{n-1}-I_{n-1}) \\
		\end{array} } \right].
	\]
	Since $2(J_{n-1}-I_{n-1})$ is an $(n-1)\times (n-1)$ nonsingular matrix,  by Schur complement formula, we have
	\begin{eqnarray*}
		\det \big (\epsilon (K_{1,n-1})\big )
		&=& \det\big[2(J_{n-1}-I_{n-1})\big] \det\Big[0- J_{1\times n-1}\big(2(J_{n-1}-I_{n-1})\big)^{-1}J_{n-1 \times 1}\Big]\\
		&=&(-1)^{n-2}(n-2)2^{n-2}\det\Big[ J_{1\times n-1}\big(I_{n-1}-J_{n-1}\big)^{-1}J_{n-1 \times 1}\Big]\\
		&=&(-1)^{n-2}(n-2)2^{n-2}\det \Big[\frac{n-1}{1-(n-1)}\Big]\\
		&=&(-1)^{n-1}(n-1)2^{n-2}.\\
	\end{eqnarray*}
	Now, the characteristic polynomial of $\epsilon(K_{1,n-1})$ is
	\[\det \big(\epsilon(K_{1,n-1})-\lambda I_n \big)=
	\det \left[ {\begin{array}{cc}
		-\lambda &  J_{1\times n-1} \\
		J_{n-1 \times 1} & 2(J_{n-1}-I_{n-1})-\lambda I_{n-1} \\
		\end{array} } \right]\]
	By Schur complement formula, we have
	\begin{eqnarray*}
		\det \big (\epsilon (K_{1,n-1})-\lambda I_n)\big )&=& (-\lambda) \det\big[2(J_{n-1}-I_{n-1})-\lambda I_{n-1}-J_{n-1 \times 1}{(-\lambda)}^{-1}J_{1 \times n-1}\big]\\
		&=& (-\lambda) \det\big[2(J_{n-1}-I_{n-1})-\lambda I_{n-1}+\frac{1}{\lambda}J_{n-1}\big]\\
		&=&(-\lambda) \det\big[(2+\frac{1}{\lambda})J_{n-1}-(2+\lambda)I_{n-1}\big]\\
		&=&(-\lambda)\big[(n-1)(2+\frac{1}{\lambda})-(2+\lambda)\big](-2-\lambda)^{n-2}\\
		&=&\big({\lambda}^2-(2n-4)\lambda-(n-1)\big)(-2-\lambda)^{n-2}\\
		&=&\Big[\lambda-\Big( (n-2)\pm \sqrt{n^2-3n+3}\Big)\Big](-2-\lambda)^{n-2}.\\
	\end{eqnarray*}
	So, $ spec_{\epsilon}(K_{1,n-1})=
	\left\{ {\begin{array}{cc}
		(n-2)\pm \sqrt{n^2-3n+3}& -2 \\
		1 & n-2\\
		\end{array} } \right\}.$
	
	Now, if $n \geq 3$, then   $-2$ is the least $\epsilon$-eigenvalue of $K_{1, n-1}$. Suppose not, then $(n-2) - \sqrt{n^2-3n+3} < -2$, that is, $n < \sqrt{n^2-3(n-1)}$, which is not possible.
\end{proof}

\begin{thm}\label{sol-conj}
	Let $T$ be a tree of order $n$, other than $P_2$, and $\epsilon_n(T)$ be the least $\epsilon$-eigenvalue of $T$. Then,  $\epsilon_n(T)\leq -2$ with equality if and only if $T$ is the star.
\end{thm}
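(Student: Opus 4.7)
The plan is to split into two cases depending on whether $T$ is a star. The star case is already done by Theorem \ref{star-eig}, which shows $\epsilon_n(K_{1,n-1}) = -2$ for $n \geq 3$. This handles the equality side entirely, so all that remains is to prove the strict inequality $\epsilon_n(T) < -2$ for every tree $T$ on $n \geq 3$ vertices that is not a star.

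The key structural reduction I would use is that a tree with diameter at most $2$ must be a star. Indeed, if $\mathrm{diam}(T) \leq 2$ with $n \geq 3$, then $T$ has radius $1$, so some vertex is adjacent to all others, and since $T$ is a tree on $n$ vertices this vertex accounts for all $n-1$ edges, forcing $T = K_{1,n-1}$. Consequently, for any non-star tree $T$ with $n \geq 3$ we must have $\mathrm{diam}(T) \geq 3$.

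Now pick vertices $u, v \in V(T)$ realizing the diameter, $d(u,v) = d := \mathrm{diam}(T) \geq 3$. Since $u$ and $v$ are endpoints of a diametrical path in a tree, both are leaves and both satisfy $e(u) = e(v) = d$ (any vertex farther than $d$ from $u$ would contradict $\mathrm{diam}(T) = d$, while the other endpoint realizes distance $d$). Therefore $d(u,v) = \min\{e(u), e(v)\}$, so by definition $\epsilon(T)_{uv} = d$, and the $2 \times 2$ principal submatrix of $\epsilon(T)$ indexed by $\{u, v\}$ is
\[
\begin{pmatrix} 0 & d \\ d & 0 \end{pmatrix},
\]
whose eigenvalues are $\pm d$. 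By the interlacing theorem recalled in Section \ref{sec2}, the least eigenvalue of $\epsilon(T)$ is bounded above by $-d \leq -3 < -2$, which yields the desired strict inequality.

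Combining the two cases gives the theorem. The proof is short, and there is no real obstacle: the whole argument reduces to (i) invoking Theorem \ref{star-eig} for the star and (ii) producing a $2 \times 2$ diametrically indexed principal submatrix with a suitably large off-diagonal entry for any non-star tree. The only point that I would take care to verify cleanly is the claim that endpoints of a diametrical path in a tree must both have eccentricity equal to the diameter, since it is this fact that ensures the off-diagonal entry in the submatrix is actually nonzero in $\epsilon(T)$.
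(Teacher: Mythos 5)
Your proposal is correct and follows essentially the same route as the paper: both arguments reduce the non-star case to exhibiting the $2\times 2$ principal submatrix of $\epsilon(T)$ indexed by the endpoints of a diametrical path, whose off-diagonal entry is the diameter $d\geq 3$, and then apply interlacing to get $\epsilon_n(T)\leq -d\leq -3$. Your added justification that a non-star tree on $n\geq 3$ vertices has diameter at least $3$, and that diametrical endpoints have eccentricity equal to the diameter, makes explicit what the paper asserts without proof.
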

\begin{proof}
	Let $T$ be a tree on $n$ vertices, where $n \geq 3.$ we shall show that, the inequality $\epsilon_n(T)<-2$  holds for any tree $T$ other than the star. Using this and  Theorem \ref{star-eig}, we can conclude that the conjecture is true.
	
	To show  $\epsilon_n(T)<-2$,  we shall show that always there  exists a $2 \times 2 $ principal submatrix of $\epsilon(T)$ such that one of whose eigenvalues is  less than or equal to  $-3$.
	
	Let $T$ be tree on $n$ vertices, other than the star. Without loss of generality,  let $P(v_1,v_n)$ be the longest path in $T$. Then, $d(v_1,v_n)=k$ and $3\leq k\leq n-1$. Since $P(v_1,v_n)$ is the longest  path and $d(v_1,v_n)=k$, the eccentricities of $v_1$ and $v_n$ are equal to $k$. Hence, the $(1,n)^{th}$ entry  of  $\epsilon(T)$ is $k$.  That is,   \[
	\left[ {\begin{array}{cc}
		0&k \\
		k& 0 \\
		\end{array} } \right],
	\] is a   $2\times 2$ principal submatrix of $\epsilon(T)$.
	Now, the eigenvalues of the matrix $\left[ {\begin{array}{cc}
		0&k \\
		k& 0 \\
		\end{array} } \right]$  are $k,-k$, with $3\leq k \leq n-1$. Therefore, by interlacing theorem, $\epsilon(T)$ must have an eigenvalue less than or equal to $-3$. This completes the proof.
\end{proof}
In the next theorem, we provide an alternate proof for \cite[Theorem 1]{ecc-main}.

\begin{thm}\label{alt-pf}
	The eccentricity matrix of a tree is irreducible.
\end{thm}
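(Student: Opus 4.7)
I would work with the general criterion recalled in Section~\ref{sec2}: the symmetric nonnegative matrix $\epsilon(T)$ is irreducible if and only if the ``support graph'' $G(\epsilon(T))$ — whose edges are the pairs $\{v_i,v_j\}$ with $d(v_i,v_j)=\min\{e(v_i),e(v_j)\}$ — is connected. So the whole task reduces to showing that $G(\epsilon(T))$ is connected whenever $T$ is a tree.

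The plan is to exhibit a small ``core'' of $G(\epsilon(T))$ to which every vertex is adjacent. Fix a diametral path $P=v_0\!-\!v_1\!-\!\cdots\!-\!v_d$ of $T$, so that $d=\mathrm{diam}(T)$. Because $P$ is longest, $v_0$ and $v_d$ are leaves, and clearly $e(v_0)=e(v_d)=d$. Thus $d(v_0,v_d)=d=\min\{e(v_0),e(v_d)\}$, which gives one edge $\{v_0,v_d\}$ in $G(\epsilon(T))$.

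The main step is the following tree-theoretic claim: \emph{for every vertex $v\in V(T)$, $e(v)=\max\{d(v,v_0),d(v,v_d)\}$.} Once this is established, pick any $v\in V(T)$. Then $d(v,v_i)=e(v)$ for some $i\in\{0,d\}$; and since $d(v,v_i)\leq d=e(v_i)$, we get $d(v,v_i)=\min\{e(v),e(v_i)\}$, so $\{v,v_i\}$ is an edge of $G(\epsilon(T))$. Combined with the edge $\{v_0,v_d\}$, this shows that every vertex lies in the same component as $v_0$, so $G(\epsilon(T))$ is connected and $\epsilon(T)$ is irreducible.

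The real work is in the claim, and this is where I would spend the effort. Given any $v$ and any $u$, let $x$ and $y$ be the vertices of $P$ closest to $v$ and $u$ respectively (since $T$ is a tree, the path from $v$ to the subtree containing $P$ is unique, and similarly for $u$). Write $x=v_i$, $y=v_j$, WLOG $i\leq j$. The unique $v$–$u$ path in $T$ is $v\to x\to\cdots\to y\to u$ along $P$ between $x$ and $y$, so
\[
 d(v,u)=d(v,x)+(j-i)+d(y,u).
\]
The diameter bound $d(u,v_0)\leq d$ together with $d(u,v_0)=d(u,y)+j$ yields $d(u,y)\leq d-j$. Substituting,
\[
 d(v,u)\leq d(v,x)+(j-i)+(d-j)=d(v,x)+(d-i)=d(v,v_d).
\]
Hence $d(v,u)\leq d(v,v_d)\leq\max\{d(v,v_0),d(v,v_d)\}$ for every $u$, which is exactly the claim (the reverse inequality being trivial since $v_0,v_d$ are specific vertices). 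This argument is the only nontrivial ingredient; everything else is bookkeeping with the irreducibility criterion.
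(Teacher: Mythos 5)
Your proposal is correct and follows essentially the same route as the paper: both reduce irreducibility to connectivity of the support graph $G(\epsilon(T))$ and show that every vertex has a nonzero $\epsilon$-entry with one of the two endpoints of a fixed longest path. The only real difference is in execution --- you prove directly the clean lemma $e(v)=\max\{d(v,v_0),d(v,v_d)\}$ by a single distance computation (note that when the attachment points $x$ and $y$ coincide your displayed equality should be an inequality, via the triangle inequality, but the bound you need still follows), whereas the paper establishes the same fact by contradiction through a three-case analysis of where the offending eccentric vertex can lie relative to the diametral path.
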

\begin{proof} Let $T$ be a tree on $n$ vertices with vertex set $V(T)=\{v_1,v_2,\hdots,v_n\}$, and let $\epsilon(T)$ be the eccentricity matrix of $T$. Let $P(v_1,v_n)$ be a path of longest length, say $t$, in $T$. Then $v_1$ and $v_n$ must be pendant vertices in T. Let us relabel the vertices of the tree $T$ with respect to $P(v_1,v_n)$ as follows:
	Let $v_1=v_{s_1},v_{s_2},\hdots,v_{s_t}=v_n$ be the vertices in $P(v_1,v_n)$.
	Let $B_{s_l}$ denote the collection of all branches at the vertex $v_{s_l}$, other than the branches containing the vertices belong to the path $P(v_1,v_n)$.
	Let $B_{s_1},B_{s_2},\hdots,B_{s_t}$ be the collection of all branches, defined as above, at the vertices $v_{s_1},v_{s_2},\hdots,v_{s_t}$, respectively. Note that, $B_{s_1}$ and $B_{s_t}$ are empty.\\
	
	\hfill\includegraphics[scale= 0.50]{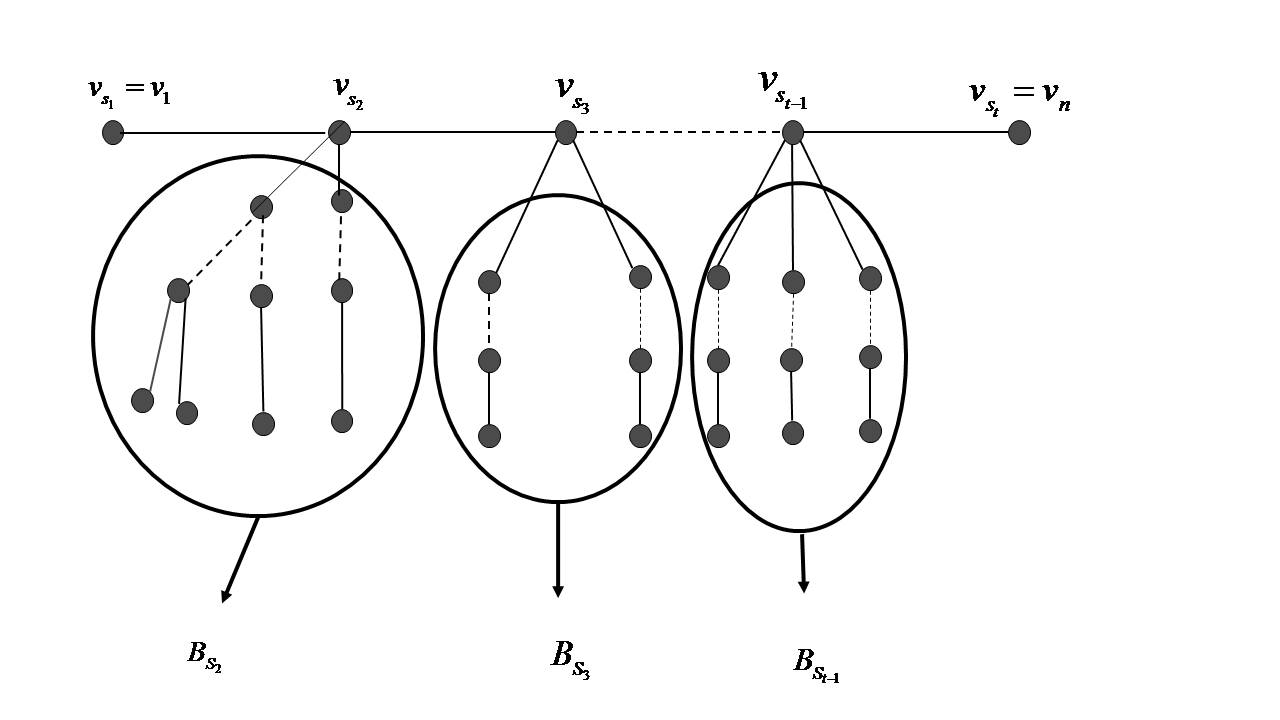}\hspace*{\fill}
	
	To prove $\epsilon(T)$ is irreducible, it is enough to show that the underlying graph $G(\epsilon(T))$ of $\epsilon(T)$ is connected. We shall prove that, for every $v_j\in \{2,3,\hdots,n-1\}$, either $\epsilon(T)_{1j}\neq 0$ or $\epsilon(T)_{nj}\neq 0$, so that $G(\epsilon(T))$ is connected.
	
	Suppose that, for some vertex $v_j$, both  $\epsilon(T)_{1j}= 0$ and $\epsilon(T)_{nj}= 0$. Then there is a vertex $v_k$ in $T$ such that $d(v_j, v_k)>d(v_1,v_j)$ and $d(v_j,v_k)>d(v_n,v_j)$.
	
	\textbf{Case(I):} If $v_j{\in P(v_1,v_n)}$, then either the path $P(v_1,v_k)$ or the path $P(v_n,v_k)$ has length larger than the length of the path $P(v_1, v_n)$ in $T$, which is not possible.
	
	\textbf{Case(II):} Let $v_j$ and $v_k$ be in same $B_{s_i}$, for some $i=2,\hdots,t-1$. If $v_j{\in P(v_1,v_k)}$, then either the path $P(v_1,v_k)$ or the path $P(v_n,v_k)$  have length larger than length of $P(v_1,v_n)$, which contradicts that $P(v_1,v_n)$ has maximum length in T.
	
	Suppose  $v_j\notin P(v_1,v_k)$. Let  $v_l$ be the vertex in $P(v_j,v_k)$ such that $d(v_1,v_l)$ is minimum among all the vertices in the path $P(v_j,v_k)$. Therefore, $d(v_k,v_l)>d(v_1,v_l)$, and hence $d(v_{s_i},v_k)>d(v_{s_i},v_1)$.  Adding $d(v_n,v_{s_i})$ on both the sides in $d(v_{s_i},v_k)>d(v_{s_i},v_1)$, we get $ d(v_n,v_{s_i})+d(v_{s_i},v_k)> d(v_n,v_{s_i})+d(v_{s_i},v_1)$. That is,
	$ d(v_n,v_k)>d(v_n,v_1)$, which is not possible.
	
	\textbf{Case(III):} Let $v_j$ and $v_k$ be in the branches corresponding to different vertices in the path $P(v_1, v_n)$, say, $v_j\in B_{s_h}$ and $v_k\in B_{s_i}$. Without loss of generality, assume that the vertex $v_{s_h}$ is nearer to $v_1$ than the vertex $v_{s_i}$. Since, $d(v_j,v_k)>d(v_j,v_n)$, we have $ d(v_{s_i},v_k)>d(v_{s_i}, v_n)$. Now, adding $d(v_1,v_{s_i})$ on both sides, we get $d(v_1,v_{s_i})+d(v_{s_i},v_k)> d(v_1,v_{s_i})+d(v_{s_i}, v_n)$. That is, $ d(v_1,v_k)>d(v_1,v_n)$,
	which is again a contradiction.
	
	Hence, either $\epsilon(T)_{1j}\neq 0$ or $\epsilon(T)_{nj}\neq 0$, which proves that the underlying graph  $G(\epsilon(T))$ is connected, and hence $\epsilon(T)$ is irreducible.
\end{proof}

\section{$\epsilon$-spectra of some classes of graphs}\label{sec4}

In this section, we compute $\epsilon$-eigenvalues of some classes of graphs.
First, we compute the $\epsilon$-spectrum of corona of any graph $G$ with the complete graph on $n$-vertices. Surprisingly, the spectrum of the corona does not depend on the structure of the graph $G$.

\begin{thm}\label{spec-cor}
	Let $K_n$ be the complete graph on $n$ vertices, and let $G$ be any connected graph on $m$ vertices. Then \[
	spec_{\epsilon}(K_n\circ G)=
	\left\{ {\begin{array}{ccccc}
		0& -\lambda_1&-\lambda_2 &\lambda_1(n-1)& \lambda_2(n-1)  \\
		n(m-1) & n-1 & n-1 & 1& 1\\
		\end{array} } \right\},
	\]
	where $\lambda_1$ and $\lambda_2$ are the roots of $x^2-3mx-4m=0$.
\end{thm}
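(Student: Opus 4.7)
The plan is to expose $\epsilon(K_n\circ G)$ as a Kronecker product and then read off its spectrum using Lemmas \ref{lem:A} and \ref{kron-eigen}. The fact that the statement is independent of the structure of $G$ already suggests that only gross ``which-copy-am-I-in'' information can survive in the eccentricity matrix.

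First I would compute the two eccentricities that arise in $K_n\circ G$. Every vertex $v_i$ of the central $K_n$ is adjacent to the other $K_n$-vertices and to every vertex of $H_i$, and reaches any vertex of $H_j$ ($j\neq i$) in two hops, so $e(v_i)=2$. Any $u\in V(H_i)$ is within distance $2$ of $v_i$, of its $H_i$-neighbours, and of each $v_j$, and is at distance exactly $3$ from every vertex of $H_j$ for $j\neq i$, giving $e(u)=3$. Comparing $d(\cdot,\cdot)$ with $\min(e(\cdot),e(\cdot))$, the only nonzero entries of $\epsilon(K_n\circ G)$ are a $2$ between $v_i$ and $V(H_j)$ (since $d=2=\min(2,3)$) and a $3$ between $V(H_i)$ and $V(H_j)$ (since $d=3=\min(3,3)$), always for $i\neq j$. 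In particular no entry depends on the edges of $G$.

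Next I would relabel the vertices so that each $v_i$ is grouped with the $m$ vertices of its copy $H_i$, producing $n$ blocks of size $m+1$. In this ordering every diagonal block is zero and every off-diagonal block equals
\[
M \;=\; \begin{pmatrix} 0 & 2J_{1\times m}\\ 2J_{m\times 1} & 3J_{m}\end{pmatrix},
\]
so $\epsilon(K_n\circ G)=(J_n-I_n)\otimes M$. By Lemma \ref{lem:A} applied with $m$ in place of $n$, the spectrum of $M$ is $\{0,\lambda_1,\lambda_2\}$ with multiplicities $m-1,1,1$, where $\lambda_1,\lambda_2$ are the roots of $x^2-3mx-4m=0$. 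The spectrum of $J_n-I_n$ is $\{n-1,-1\}$ with multiplicities $1$ and $n-1$.

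Finally I would invoke Lemma \ref{kron-eigen}: the eigenvalues of $(J_n-I_n)\otimes M$ are all products of eigenvalues, yielding $0$ from both $(n-1)\cdot 0$ and $(-1)\cdot 0$ with total multiplicity $(m-1)+(n-1)(m-1)=n(m-1)$, then $(n-1)\lambda_1$ and $(n-1)\lambda_2$ once each, and $-\lambda_1,-\lambda_2$ with multiplicity $n-1$ each. This matches the claimed spectrum exactly. I do not anticipate a real obstacle; the only genuinely delicate step is recognising the Kronecker factorisation after the right vertex reordering, after which everything is bookkeeping with the lemmas already at hand.
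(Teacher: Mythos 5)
Your proposal is correct and follows essentially the same route as the paper: both identify $\epsilon(K_n\circ G)$ as a Kronecker product of $J_n-I_n$ with the block matrix $\left[\begin{smallmatrix}0 & 2J_{1\times m}\\ 2J_{m\times 1} & 3J_m\end{smallmatrix}\right]$ (the paper writes the factors in the opposite order, corresponding to a different vertex labeling, which does not affect the spectrum) and then apply Lemmas \ref{lem:A} and \ref{kron-eigen}. Your explicit verification of the eccentricities and of which distance entries survive is a welcome addition that the paper leaves implicit.
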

\begin{proof} Let $K_n$ be the complete graph on $n$ vertices, and let $G$ be any connected graph on $m$ vertices. Then, the graph $K_n\circ G$ consists of  $n$ vertices of the complete graph $K_n$ which are labeled using the index set $\{1,2,\hdots,n\}$, and $n$ disjoint copies $G_1, G_2,\hdots,G_n$ of $G$. Choose an arbitrary ordering $g_1,g_2,\hdots,g_m$ of the vertices of $G$, and label the vertices of   $G_i$ corresponding to $g_k$ by the indices $i+nk$ \cite{cor-spec}.

	Under this labeling, the eccentricity matrix of $K_n\circ G$ is given by $\epsilon (K_n\circ G)=A\otimes B$, where
	$A=
	\left[ {\begin{array}{cc}
		0& 2J_{1\times m} \\
		2J_{m \times 1}& 3J_{m} \\
		\end{array} } \right]$ and $B=J_n-I_n$. \\
	By Lemma \ref{lem:A},  we have
	$ \sigma(A)=
	\left\{ {\begin{array}{cc}
		0 & \frac{3m\pm \sqrt {9m^2+16m}}{2}  \\
		(m-1) &1\\
		\end{array} } \right\}$
	and
	$ \sigma(B)=
	\left\{ {\begin{array}{cc}
		-1 & (n-1) \\
		(n-1) &1\\
		\end{array} } \right\}.$
	Now, by Lemma \ref{kron-eigen}, the spectrum of $A\otimes B$ is
	\[
	\sigma(A\otimes B)=
	\left\{ {\begin{array}{ccccc}
		0& -\lambda_1&-\lambda_2 &\lambda_1(n-1)& \lambda_2(n-1)  \\
		n(m-1) & n-1 & n-1 & 1& 1\\
		\end{array} } \right\},
	\]
	and hence \[
	spec_{\epsilon}(K_n\circ G)=
	\left\{ {\begin{array}{ccccc}
		0& -\lambda_1&-\lambda_2 &\lambda_1(n-1)& \lambda_2(n-1)  \\
		n(m-1) & n-1 & n-1 & 1& 1\\
		\end{array} } \right\}.
	\]
	
\end{proof}

\begin{ex}
	Let us illustrate the labeling process which is used in the above theorem with the following particular example: 
	
	\hfill\includegraphics[scale= 0.50]{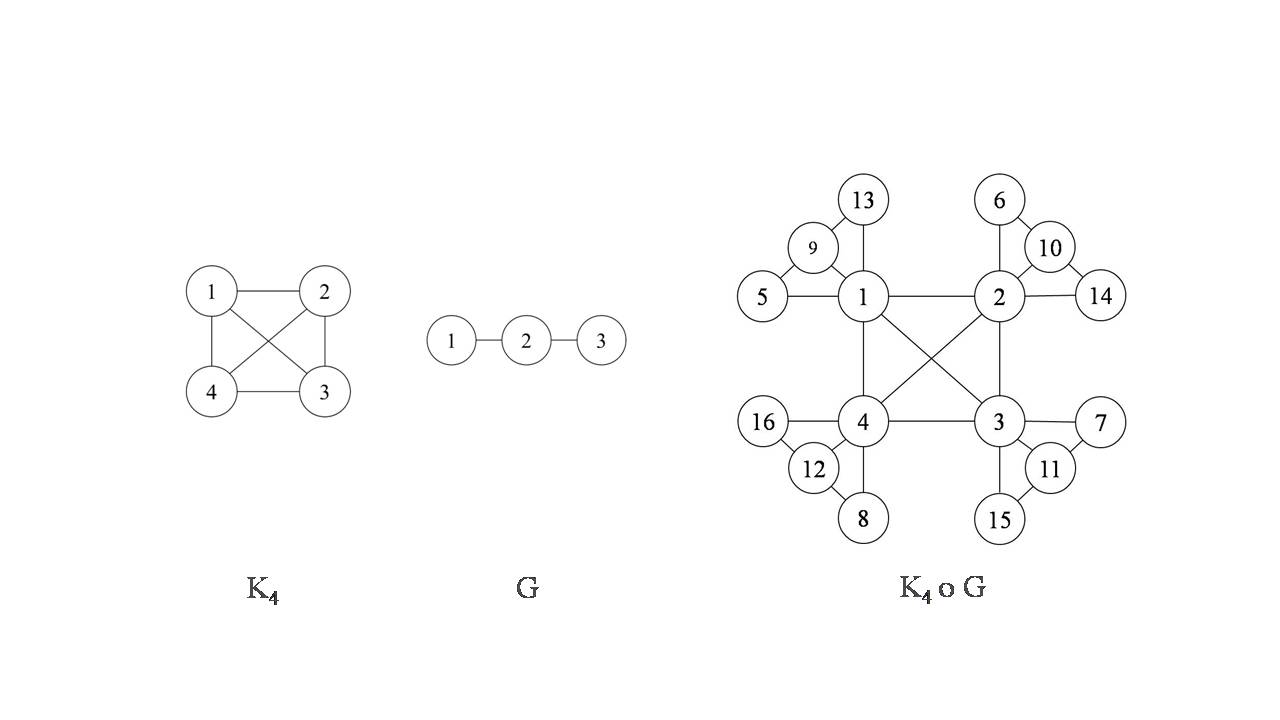}\hspace*{\fill}
	
\end{ex}

\begin{rmk}
	Instead of taking  $n$ copies of the graph $G$, if we take $n$ different connected graphs $G_1, \ldots, G_n$ with $m$ vertices and add the vertex $v_i$ of $K_n$ to every vertex of $G_i$, $i = 1, 2\ldots, n$, then also we get the same spectrum as in Theorem \ref{spec-cor}. Thus, we can construct infinitely many pairs of non-isomorphic graphs with same  $\epsilon$-spectra.
\end{rmk}

The following result is known about the $\epsilon$-spectrum of complete product of a graph $G$ with the complete graph $K_1$.
\begin{thm}\label{lemma-main-ecc1}\cite[Lemma 3.6]{ecc-main}
	Let $G$ be a $r$-regular graph with diameter $2$, and let $r, \lambda_2, \ldots, \lambda_n$ be the eigenvalues of the adjacency matrix of $G$, then
	\[
	\epsilon(G \vee K_1)=
	\left[ {\begin{array}{cc}
		2A(\overline{G})& J_{n \times 1} \\
		J_{1 \times n}& 0 \\
		\end{array} } \right],
	\] and \[
	spec_{\epsilon}(G \vee K_1)=
	\left\{ {\begin{array}{cccc}
		(n-r-1)\pm \sqrt{(n-r-1)^2 + n} & -2(\lambda_2+1)&\hdots& -2(\lambda_n+1) \\
		1 &1 &1&1\\
		\end{array} } \right\}.
	\]
\end{thm}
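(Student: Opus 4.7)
The plan decomposes naturally into two independent parts: first, verifying the claimed block form for $\epsilon(G\vee K_1)$; and second, diagonalizing that block matrix using the spectral structure of $G$ inherited through the relation $A(\overline{G})=J_n-I_n-A(G)$.

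For the block form, let $u$ be the apex vertex contributed by $K_1$ and set $H:=G\vee K_1$. Since $u$ is adjacent to every vertex of $G$, we have $e_H(u)=1$. Because $G$ is $r$-regular with diameter $2$, $G$ is not complete, so $r\leq n-2$ and every $v_i\in V(G)$ has at least one non-neighbor in $G$; that non-neighbor is reached in $H$ through $u$ in exactly two steps, so $e_H(v_i)=2$ for every $i$. Applying the definition of $\epsilon$ entrywise then produces exactly the stated blocks: for $v_i,v_j\in V(G)$, $\min\{e_H(v_i),e_H(v_j)\}=2$ equals $d_H(v_i,v_j)$ iff $v_i\not\sim_G v_j$, giving the top-left block $2A(\overline{G})$; for $u$ and $v_i$, $\min\{1,2\}=1=d_H(u,v_i)$, giving the off-diagonal $J$-block; and the $(u,u)$-entry is $0$.

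For the spectrum, I would exploit regularity to simultaneously diagonalize $A(G)$ and $A(\overline{G})$ in a common eigenbasis. Since $\mathbf{1}_n$ is an eigenvector of $A(G)$ for $r$, the identity $A(\overline{G})=J_n-I_n-A(G)$ gives $A(\overline{G})\mathbf{1}_n=(n-1-r)\mathbf{1}_n$, while for every eigenvector $x$ of $A(G)$ with eigenvalue $\lambda_i$, $i\geq 2$, we have $x\perp\mathbf{1}_n$ and $2A(\overline{G})x=-2(1+\lambda_i)x$. I would then construct eigenvectors of $\epsilon(H)$ in two families. First, for each such $x$, the padded vector $\binom{x}{0}$ is an eigenvector of $\epsilon(H)$ with eigenvalue $-2(1+\lambda_i)$, because $J_{1\times n}x=0$; this yields the $n-1$ eigenvalues $-2(\lambda_i+1)$. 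Second, the two-dimensional subspace spanned by $\binom{\mathbf{1}_n}{0}$ and $\binom{\mathbf{0}}{1}$ is $\epsilon(H)$-invariant, and in this basis the restriction reads as the $2\times 2$ matrix with columns $(2(n-1-r),n)^T$ and $(1,0)^T$, whose characteristic polynomial $\mu^2-2(n-1-r)\mu-n$ has roots $(n-1-r)\pm\sqrt{(n-1-r)^2+n}$, supplying the two remaining eigenvalues.

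The main obstacle is essentially bookkeeping: one must be careful that the diameter-$2$ hypothesis forces every $v_i$ to have $H$-eccentricity exactly $2$ (rather than $1$), so that the top-left block is truly $2A(\overline{G})$ without any entries suppressed by a smaller minimum eccentricity. Once the block form is in place, the spectral step is routine via the two invariant subspaces, and the count $(n-1)+2=n+1$ confirms that the list exhausts the spectrum of $\epsilon(G\vee K_1)$.
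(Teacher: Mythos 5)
Your argument is correct and complete. Note, however, that this paper does not prove the statement at all: it is imported verbatim from the cited reference (Lemma 3.6 of Wang--Lu--Belardo--Randi\'c), so there is no in-paper proof to compare against. On its own merits your proposal holds up: the eccentricity computation ($e_H(u)=1$, $e_H(v_i)=2$, using regularity plus diameter $2$ to rule out $G=K_n$) correctly yields the block form, and the two invariant subspaces --- the padded eigenvectors $\binom{x}{0}$ with $x\perp\mathbf{1}_n$ giving $-2(\lambda_i+1)$, and the span of $\binom{\mathbf{1}_n}{0},\binom{\mathbf{0}}{1}$ whose restriction has characteristic polynomial $\mu^2-2(n-1-r)\mu-n$ --- account for all $n+1$ eigenvalues. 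This eigenvector-based route is arguably cleaner than the Schur-complement/equitable-partition style computation this paper uses for analogous results (e.g.\ Theorem \ref{star-eig} and Lemma \ref{lem:A}), since it exhibits the eigenvectors explicitly rather than manipulating determinants; the only stylistic caveat is that your $2\times 2$ restriction matrix is written in a non-orthonormal basis and hence is not symmetric, which is harmless for extracting eigenvalues but worth a remark.
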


Using the above theorem, next we shall compute the eigenvalues of the wheel graph.
\begin{thm}
	Let $W_{n+1}$ be the wheel graph on $n+1$ vertices, with $n\geq 4$. Then, \[
	spec_{\epsilon}(W_{n+1})=
	\left\{ {\begin{array}{cccc}
		(n-3)\pm \sqrt{(n-3)^2+n} & -2(\lambda_2+1)&\hdots& -2(\lambda_n+1) \\
		1 &1 &1&1\\
		\end{array} } \right\}.
	\]
\end{thm}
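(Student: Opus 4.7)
The plan is to recognize that $W_{n+1} = C_n \vee K_1$, where $C_n$ is a $2$-regular graph, and then invoke Theorem \ref{lemma-main-ecc1} with $G = C_n$ and $r = 2$. The asserted spectrum in the theorem is exactly the formula of Theorem \ref{lemma-main-ecc1} after substituting $r=2$, so once the hypotheses are in place the conclusion drops out.

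The subtle point is that Theorem \ref{lemma-main-ecc1} is stated for $r$-regular graphs of diameter $2$, whereas $C_n$ has diameter $\lfloor n/2 \rfloor$, which exceeds $2$ as soon as $n \geq 6$. I would therefore begin by verifying directly that, for $n \geq 4$, the eccentricity matrix of $W_{n+1}$ still has the block form
\[
\epsilon(W_{n+1}) = \begin{pmatrix} 2A(\overline{C_n}) & J_{n\times 1} \\ J_{1\times n} & 0 \end{pmatrix}
\]
needed in Theorem \ref{lemma-main-ecc1}. The hub is universal, so it has eccentricity $1$, while every cycle vertex is at distance $1$ from the hub and from its two cycle-neighbours and at distance $2$ from all other cycle vertices (via the hub), giving it eccentricity $2$ (here we use that $n \geq 4$ so that non-adjacent cycle vertices exist). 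From this, the hub--cycle entries are $1 = \min(e(\text{hub}), e(v))$, the entries between two cycle vertices are $2$ exactly when they are non-adjacent in $C_n$, and all other entries are $0$. This confirms the block form above; note that this argument only uses the structure of $G \vee K_1$ and is independent of the diameter of $G$.

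With the block form in hand, the rest of the proof of Theorem \ref{lemma-main-ecc1} applies verbatim: the adjacency eigenvalues of $C_n$ are $\lambda_1 = 2 > \lambda_2 \geq \cdots \geq \lambda_n$ (namely $2\cos(2\pi k/n)$ for $k=0,\dots,n-1$), and the Schur complement calculation used in that theorem then yields exactly
\[
\operatorname{spec}_{\epsilon}(W_{n+1}) = \left\{(n-3) \pm \sqrt{(n-3)^2 + n},\ -2(\lambda_2+1),\ \dots,\ -2(\lambda_n+1)\right\},
\]
with the stated multiplicities.

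The main obstacle is the hypothesis mismatch just mentioned: a reader could object that $C_n$ is not a diameter-$2$ graph, so Theorem \ref{lemma-main-ecc1} is not formally applicable. The fix is the short direct verification of the block form of $\epsilon(W_{n+1})$ above, which shows that the diameter-$2$ assumption on $G$ in Theorem \ref{lemma-main-ecc1} is not actually used in its conclusion for $G \vee K_1$; only the facts that $G$ is $r$-regular and that every vertex of $G$ has eccentricity $2$ in $G \vee K_1$ (which holds whenever $G$ is not complete) are needed. Once this is pointed out, the result follows with no further computation.
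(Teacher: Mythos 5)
Your proof is correct and follows essentially the same route as the paper: identify $W_{n+1}=C_n\vee K_1$, verify the block form of $\epsilon(W_{n+1})$ directly, and read off the spectrum from Theorem~\ref{lemma-main-ecc1} with $r=2$. Your additional observation --- that $C_n$ fails the diameter-$2$ hypothesis of Theorem~\ref{lemma-main-ecc1} once $n\ge 6$, but that this hypothesis is only used to obtain the block form, which you check by hand --- addresses a point the paper passes over in silence and is worth retaining.
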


\begin{proof}
	Let $C_n$ be a cycle of length $n(n\geq 4)$. Since the cycle is a $2$-regular graph, $2$ is an eigenvalue of the adjacency matrix of  $C_n$. Let the eigenvalues of adjacency matrix of  $C_n$ be  $2, \lambda_2,\hdots, \lambda_n$.
	
	It is easy to see that
	\[
	\epsilon(W_{n+1})=
	\left[ {\begin{array}{cc}
		2A(\overline{C_n})& J_{n\times 1} \\
		J_{1\times n} & 0 \\
		\end{array} } \right].
	\]
	Now the result follows from  Theorem \ref{lemma-main-ecc1}.
\end{proof}

In the next theorem, we compute the $\epsilon$-spectra of the barbell graphs.
\begin{thm}
	Let $B_{n,n}$ be the  $n$-barbell graph. Then,  \[
	spec_{\epsilon}(B_{n,n})=
	\left\{ {\begin{array}{ccc}
		0 & \frac{3(n-1)\pm \sqrt {9n^2-2n-7}}{2} &-{\frac{3(n-1)\pm \sqrt {9n^2-2n-7}}{2} } \\
		2(n-2) &1&1\\
		\end{array} } \right\}.
	\]
\end{thm}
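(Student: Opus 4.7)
The plan is to exploit the $\mathbb{Z}_2$-symmetry of $B_{n,n}$ swapping the two copies of $K_n$, and then reduce to the two preceding lemmas. Write the vertices of the first $K_n$ as $u_1,\ldots,u_{n-1},u_n$ and those of the second as $w_1,w_2,\ldots,w_n$, with the bridge being $u_nw_1$. I would order the vertices as $u_1,u_2,\ldots,u_{n-1},u_n,w_n,w_{n-1},\ldots,w_2,w_1$, so that the involution $u_i\leftrightarrow w_{n+1-i}$ becomes a clean swap of two $n\times n$ blocks. A direct case analysis shows $e(u_n)=e(w_1)=2$ while all other vertices have eccentricity $3$, and the only retained distances in $\epsilon(B_{n,n})$ are cross-cluster: the entry is $3$ between $u_i$ and $w_j$ whenever $i\neq n$ and $j\neq 1$, the entry is $2$ between $u_i$ and $w_1$ for $i\neq n$ as well as between $u_n$ and $w_j$ for $j\neq 1$, and every other entry (including all within-cluster entries and the $(u_n,w_1)$ entry) vanishes. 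Under the above ordering this gives
\[
\epsilon(B_{n,n})=\begin{pmatrix} 0 & M \\ M & 0 \end{pmatrix},\qquad M=\begin{pmatrix} 3J_{n-1} & 2J_{(n-1)\times 1} \\ 2J_{1\times(n-1)} & 0 \end{pmatrix},
\]
with $M$ symmetric of size $n\times n$.

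Next I would apply Lemma \ref{b0-b1-lemma} with $B_0=0$ and $B_1=M$ to conclude that $\sigma(\epsilon(B_{n,n}))=\sigma(M)\cup\sigma(-M)$. To compute $\sigma(M)$, observe that a permutation similarity (moving the last row and column of $M$ to the first position) transforms $M$ into exactly the matrix of Lemma \ref{lem:A} with the parameter $n$ replaced by $n-1$. Therefore
\[
\sigma(M)=\left\{0^{(n-2)},\ \tfrac{3(n-1)\pm\sqrt{9(n-1)^{2}+16(n-1)}}{2}\right\},
\]
and factoring the discriminant as $(n-1)(9n+7)=9n^{2}-2n-7$ matches the form in the statement. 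Taking the union with $\sigma(-M)$ then accounts for $0$ with multiplicity $2(n-2)$ together with the four symmetric eigenvalues $\pm\tfrac{3(n-1)\pm\sqrt{9n^{2}-2n-7}}{2}$, each of multiplicity one.

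The main obstacle, such as it is, lies in choosing the vertex ordering so that the two-fold graph automorphism acts as a block swap; once $\epsilon(B_{n,n})$ is cast in the form $\begin{pmatrix}0&M\\M&0\end{pmatrix}$ with $M$ symmetric, Lemma \ref{b0-b1-lemma} gives the factorization into $\sigma(\pm M)$, pattern-matching $M$ to Lemma \ref{lem:A} finishes the spectral computation, and the only remaining work is the algebraic simplification of the discriminant.
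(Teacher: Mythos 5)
Your proposal is correct and follows essentially the same route as the paper: both write $\epsilon(B_{n,n})$ as $\left[\begin{smallmatrix}0&M\\M&0\end{smallmatrix}\right]$, apply Lemma \ref{b0-b1-lemma} to get $\sigma(M)\cup\sigma(-M)$, and identify $M$ with the matrix of Lemma \ref{lem:A} with parameter $n-1$. The only cosmetic difference is your reversal of the second copy's vertex order; with the natural ordering (bridge $v_1w_1$, both copies listed $1,\dots,n$) the off-diagonal block is already symmetric and already in the exact form of Lemma \ref{lem:A}, so no permutation similarity is needed.
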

\begin{proof} Let $K_n$ be the complete graph on $n$ vertices with vertex set $\{v_1,v_2,\hdots,v_n\}$, and let us consider a copy of $K_n$ with vertex set $\{w_1, w_2, \hdots, w_n\}$. Let $B_{n,n}$ be the barbell graph obtained by joining the vertices of $v_1$ and $w_1$ in the two copies of $K_n$. Then, the eccentric matrix of $B_{n,n}$ is given by
	\[
	\epsilon(B_{n,n})=
	\left[ {\begin{array}{cc}
		0_{n\times n}& A_{n\times n} \\
		{A}_{n\times n}& 0_{n\times n} \\
		\end{array} } \right],
	\]
	where \[
	A=
	\left[ {\begin{array}{cc}
		0& 2J_{1\times n-1} \\
		2J_{n-1 \times 1}& 3J_{n-1} \\
		\end{array} } \right].
	\]
	From Lemma $\ref{lem:A}$,  we get
	\[
	\sigma(A)=
	\left\{ {\begin{array}{cc}
		0 & \frac{3(n-1)\pm \sqrt {9n^2-2n-7}}{2}  \\
		(n-2) &1\\
		\end{array} } \right\}.
	\]
	Now, by Lemma \ref{b0-b1-lemma},  the spectrum  of the matrix $\epsilon(B_{n,n})$ is the union of eigenvalues of $A$ and $-A$. That is,
	\[
	spec_{\epsilon}(B_{n,n})=
	\left\{ {\begin{array}{ccc}
		0 & \frac{3(n-1)\pm \sqrt {9n^2-2n-7}}{2} &-{\frac{3(n-1)\pm \sqrt {9n^2-2n-7}}{2} } \\
		2(n-2) &1&1\\
		\end{array} } \right\}.
	\]
\end{proof}

In the next theorem, we compute the $\epsilon$-spectrum of the cocktail-party graph.

\begin{thm}
	Let $CP(n)$ be the cocktail-party graph on $2n$ vertices. Then,  \[
	\textbf spec_{\epsilon}(CP(n))=
	\left\{ {\begin{array}{cc}
		2 & -2  \\
		n & n\\
		\end{array} } \right\}.
	\]
\end{thm}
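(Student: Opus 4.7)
The plan is to identify the eccentricity matrix of $CP(n)$ explicitly and then read off its spectrum via Lemma \ref{kron-eigen}.

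First I would analyze distances in $CP(n)$. By definition, $CP(n)$ is obtained from $K_{2n}$ by deleting $n$ independent edges, so the vertex set partitions into $n$ pairs $\{v_i, v_i'\}$ (the removed edges), and any two vertices in different pairs are adjacent. Hence $d(v_i, v_i') = 2$ while all other pairs of distinct vertices are at distance $1$. In particular, the diameter of $CP(n)$ is $2$ and every vertex has eccentricity exactly $2$.

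Next I would use this to locate the nonzero entries of $\epsilon(CP(n))$. Since every eccentricity equals $2$, the condition $d(v,w) = \min\{e(v), e(w)\} = 2$ is satisfied precisely when $v,w$ form one of the deleted pairs. Therefore the only nonzero entries of $\epsilon(CP(n))$ are the $2$'s at positions $(v_i, v_i')$ and $(v_i', v_i)$. If the vertices are ordered as $v_1, v_1', v_2, v_2', \ldots, v_n, v_n'$, then
\[
\epsilon(CP(n)) \;=\; I_n \otimes \begin{pmatrix} 0 & 2 \\ 2 & 0 \end{pmatrix}.
\]

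Finally, I would apply Lemma \ref{kron-eigen}. The $2 \times 2$ block has eigenvalues $\pm 2$, and $I_n$ has eigenvalue $1$ of multiplicity $n$, so the Kronecker product has eigenvalues $2$ and $-2$, each with multiplicity $n$, giving exactly the claimed spectrum. There is no real obstacle here; the only point requiring care is the verification that no entry other than the paired ones survives the eccentricity cutoff, which follows immediately from the fact that every eccentricity is $2$ and all non-paired distances are $1 < 2$.
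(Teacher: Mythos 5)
Your proposal is correct. It rests on the same structural observation as the paper, namely that the only surviving entries of $\epsilon(CP(n))$ are the weight-$2$ entries on the $n$ deleted pairs, so the eccentricity matrix is (up to a permutation of the vertices) a disjoint union of $n$ copies of $\left[\begin{smallmatrix}0&2\\2&0\end{smallmatrix}\right]$. Where you differ is in the bookkeeping: you order the vertices pairwise and write the matrix as $I_n\otimes\left[\begin{smallmatrix}0&2\\2&0\end{smallmatrix}\right]$, reading off the spectrum from the Kronecker-product lemma, whereas the paper orders one vertex from each pair first and the partners second, obtaining $\left[\begin{smallmatrix}0&2I_n\\2I_n&0\end{smallmatrix}\right]$ and invoking the symmetric $2\times 2$ block lemma (spectrum of $B_0\pm B_1$). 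The two computations are permutation-equivalent and equally short. One thing your write-up adds that the paper omits is the explicit verification that every vertex of $CP(n)$ has eccentricity exactly $2$ and that only the deleted pairs attain it, which is the step that actually justifies the claimed form of $\epsilon(CP(n))$; implicitly this requires $n\geq 2$ so that $CP(n)$ is connected and each vertex has a non-partner, a hypothesis neither you nor the paper states but which is needed for the eccentricity matrix to be defined.
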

\begin{proof} Let $K_{2n}$ be the complete graph on $2n$ vertices. Let us delete the $n$ disjoint edges from the complete graph $K_{2n}$ to obtain  $CP(n)$ as follows: First  label $K_{2n}$ using the indices $\{1,2,\hdots,2n\}$ in clockwise direction, and then delete the edges $e_{ij}$, for $i=1,\hdots,n;j=n+1,n+2,\hdots,2n$, only when $i\equiv j(\mod ~n)$. Then, the eccentricity matrix of $CP(n)$ is given by
	
	\[
	\epsilon \big(CP(n)\big)=
	\left[ {\begin{array}{cc}
		0_{n\times n}& 2I_{n\times n} \\
		2I_{n\times n}& 0_{n\times n} \\
		\end{array} } \right].
	\]
	Therefore, by Lemma \ref{b0-b1-lemma}, we have
	$$  \textbf spec_{\epsilon}(CP(n))=
	\left\{ {\begin{array}{cc}
		2 & -2  \\
		n & n\\
		\end{array} } \right\}.$$
\end{proof}

%
\begin{thm}
	Let  $K_{n_1,\hdots,n_k}$ be  the complete $k$-partite graph such that $\sum_{i=1}^k n_i=n; n_i\geq 1$ and $k\leq n-1$. Then,
	\[
	spec_{\epsilon}(K_{n_1,\hdots,n_k})=
	\left\{ {\begin{array}{ccccc}
		(-2)^n & 2(n_1-1) & 2(n_2-1)&\hdots & 2(n_k-1)\\
		n-k &1 &1&\hdots &1\\
		\end{array} } \right\}.
	\]
\end{thm}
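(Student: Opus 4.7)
The plan is to recognize that $\epsilon(K_{n_1,\ldots,n_k})$ has a very simple block-diagonal structure, then read off the spectrum from the blocks. First I would determine the eccentricity of every vertex. Two vertices in the same part $V_i$ are at distance $2$ (via any vertex of a different part), and two vertices in different parts are adjacent. Thus, provided each $n_i \geq 2$, every vertex has some vertex at distance $2$ in its own part, so $e(v) = 2$ for every $v$.

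Next I would compute the entries of the eccentricity matrix from this. For distinct $u,v$ in the same part $V_i$, $d(u,v) = 2 = \min\{e(u),e(v)\}$, giving entry $2$; for $u \in V_i$, $v \in V_j$ with $i \neq j$, $d(u,v) = 1 < 2 = \min\{e(u), e(v)\}$, giving entry $0$. Ordering the vertices part by part yields
\[
\epsilon(K_{n_1,\ldots,n_k}) \;=\; \bigoplus_{i=1}^{k} 2\bigl(J_{n_i} - I_{n_i}\bigr).
\]
Each block $2(J_{n_i} - I_{n_i})$ has eigenvalues $2(n_i-1)$ (simple, with the all-ones eigenvector) and $-2$ (with multiplicity $n_i-1$), which is immediate from the well-known spectrum of $J_{n_i}$. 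Taking the disjoint union of the block spectra, $-2$ appears with total multiplicity $\sum_i (n_i - 1) = n-k$, and $2(n_i-1)$ appears once for each $i$, which matches the claimed formula.

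There is essentially no real obstacle in this argument; the only point deserving care is the eccentricity calculation, which implicitly requires $n_i \geq 2$ for all $i$. If some $n_i = 1$, then the singleton vertex has eccentricity $1$, it becomes an eccentric vertex for every vertex of a part of size $\geq 2$, and the matrix picks up extra off-diagonal $1$'s between the singleton block and the other blocks, destroying the clean direct-sum structure. The stated hypothesis $k \leq n-1$ rules out only the all-singleton case $K_n$; I would therefore expect the proof to proceed under the stronger (and, based on the formula, necessary) assumption that every $n_i \geq 2$.
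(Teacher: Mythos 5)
Your proposal is correct and follows essentially the same route as the paper: the paper simply writes down $\epsilon(K_{n_1,\ldots,n_k})$ as the block-diagonal matrix $\bigoplus_i 2(J_{n_i}-I_{n_i})$ and takes the union of the block spectra, exactly as you do; your version just supplies the eccentricity computation that the paper omits.

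Your caveat about singleton parts is a genuine catch, not mere pedantry. If some $n_i=1$ while some $n_j\geq 2$, the singleton vertex has eccentricity $1$ and contributes nonzero off-diagonal entries to every other row, so the direct-sum structure fails and the stated spectrum is wrong: for $K_{1,2}\cong P_3$ the formula would give $\{-2,0,2\}$, whereas the actual $\epsilon$-spectrum (consistent with Theorem \ref{star-eig} for $n=3$) is $\{1+\sqrt{3},\,1-\sqrt{3},\,-2\}$. The paper's hypothesis $k\leq n-1$ excludes only the all-singleton case $K_n$, so the theorem as printed is false in these mixed cases; the correct hypothesis, which your argument actually uses, is $n_i\geq 2$ for all $i$.
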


\begin{proof} The eccentricity matrix of $K_{n_1,\hdots,n_k}$ is given by
	\[
	\epsilon(K_{n_1,\hdots,n_k})=
	\left[ {\begin{array}{cccc}
		2(J_{n_1}-I_{n_1}) & 0 & \hdots  & 0 \\
		0 & 2(J_{n_2}-I_{n_2}) & \hdots & 0 \\
		\vdots & \vdots & \ddots & \vdots \\
		0 & 0& \hdots & 2(J_{n_k}-I_{n_k}) \\
		\end{array} } \right].
	\]
	Therefore, the spectrum of $\epsilon(K_{n_1,\hdots,n_k})$ is the union of eigenvalues of $2(J_{n_1}-I_{n_1})$, $2(J_{n_2}-I_{n_2})$,$\hdots$, $2(J_{n_k}-I_{n_k})$.
	
\end{proof}

In \cite{ecc-main}, the authors established that if $G$ is a $r$-regular graph with diameter $2$, then  \[
\epsilon(G \vee G)=
\left[ {\begin{array}{cc}
	2A(\overline{G})& 0 \\
	0& 2A(\overline{G}) \\
	\end{array} } \right].
\]
In the next theorem, we consider complete product of two non-complete graphs.  If the eigenvalues of adjacency matrices of $\overline{G_1}$ and $\overline{G_2}$ are known, then the $\epsilon$-spectrum of $G_1 \vee G_2$ is the union of the spectra  of $A(\overline{G_1})$ and $A(\overline{G_2})$.
\begin{thm}
	If $G_1$ and $G_2$ be any two non-complete connected graph, then
	\[
	\epsilon(G_1 \vee G_2)=
	\left[ {\begin{array}{cc}
		2A(\overline{G_1})& 0 \\
		0& 2A(\overline{G_2}) \\
		\end{array} } \right].
	\]
\end{thm}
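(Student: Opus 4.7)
The plan is to establish the block form by directly computing distances, eccentricities, and then reading off the entries of $\epsilon(G_1 \vee G_2)$ from its definition. First, I would record the distance structure of $G_1 \vee G_2$: since every vertex of $G_1$ is adjacent to every vertex of $G_2$, we have $d(v,u) = 1$ whenever $v \in V(G_1)$ and $u \in V(G_2)$. For two vertices $v_i, v_j \in V(G_1)$, we have $d(v_i, v_j) = 1$ if $v_i \sim v_j$ in $G_1$, and otherwise $d(v_i, v_j) = 2$, realized by any path $v_i - u - v_j$ through a vertex $u \in V(G_2)$; the analogous statement holds inside $G_2$. In particular the diameter of $G_1 \vee G_2$ is at most $2$.

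Next, I would show that each vertex has eccentricity exactly $2$ in $G_1 \vee G_2$. For $v \in V(G_1)$, the fact that $G_1$ is non-complete (combined with the standing assumption that $v$ is not a universal vertex of $G_1$) provides some $v' \in V(G_1)$ with $v \not\sim v'$, hence $d(v, v') = 2$, giving $e(v) = 2$; the same reasoning yields $e(u) = 2$ for all $u \in V(G_2)$. Now the definition of $\epsilon$ immediately delivers the claimed block form. For $v_i, v_j \in V(G_1)$, the $(i,j)$ entry equals $d(v_i, v_j)$ precisely when $d(v_i, v_j) = \min(e(v_i), e(v_j)) = 2$, i.e., exactly when $v_i$ and $v_j$ are non-adjacent in $G_1$, so the $G_1$-block is $2 A(\overline{G_1})$. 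The symmetric argument inside $G_2$ gives $2 A(\overline{G_2})$ for the other diagonal block. For the off-diagonal blocks, any $v \in V(G_1)$ and $u \in V(G_2)$ satisfy $d(v,u) = 1 \neq 2 = \min(e(v), e(u))$, so the corresponding entry is $0$.

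The main subtlety — really the only place where care is needed — is the eccentricity step: the identity $e(v) = 2$ for all $v$ fails precisely when some vertex of $G_i$ is universal in $G_i$ (as then it remains universal in $G_1 \vee G_2$ and has eccentricity $1$, producing a nonzero row of $1$'s that breaks the block-diagonal shape). So the theorem, as stated, implicitly requires that neither $G_1$ nor $G_2$ contains a vertex of full degree in its own graph; under this mild hypothesis the argument above is complete and the remaining bookkeeping is routine.
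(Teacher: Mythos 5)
Your verification is correct, and since the paper's entire proof of this statement is the phrase ``Easy to verify,'' your write-up is in effect the only proof on the table; the distance computation, the eccentricity computation, and the case analysis of the entries are exactly the routine steps the authors are waving at. The valuable part of your proposal is the subtlety you flag at the end, and you should not treat it as a mild side remark: it shows the theorem as stated is actually false. ``Non-complete'' does not exclude a vertex that is universal within its own part. Take $G_1 = G_2 = P_3$, both non-complete and connected. The centre $b$ of the first $P_3$ is adjacent to every other vertex of $P_3 \vee P_3$, so $e(b)=1$, and then for every other vertex $u$ we have $d(b,u)=1=\min(e(b),e(u))$, so the row of $\epsilon(P_3 \vee P_3)$ indexed by $b$ consists entirely of $1$'s off the diagonal. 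The claimed block form would instead make that row zero in the diagonal block (since $b$ is isolated in $\overline{P_3}$) and zero in the off-diagonal block. So the correct hypothesis is the one you identify: neither $G_1$ nor $G_2$ may contain a vertex adjacent to all other vertices of its own part (equivalently, $\overline{G_1}$ and $\overline{G_2}$ have no isolated vertices). Under that hypothesis your argument is complete; without it, no proof exists because the statement fails.
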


\begin{proof}
	Easy to verify.
\end{proof}

\section{Inertia of eccentricity matrices of some classes of graphs}\label{sec5}
In this section, we compute the $\epsilon$-inertia of lollipop graph and path graph.  To begin with we compute the inertia of lollipop graph $L_{m,n}(n \geq 2)$.
\begin{thm}
	The inertia of the eccentricity matrix of  lollipop graph $L_{m,n}$ $(n \geq 2) $ is $(2,2,m+n-4)$.
\end{thm}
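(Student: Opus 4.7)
The plan is to show $\epsilon(L_{m,n})$ has rank exactly $4$, with two positive and two negative nonzero eigenvalues. Label $u_1,\ldots,u_m$ as the vertices of $K_m$ (with $u_1$ the bridge endpoint) and $v_1,\ldots,v_n$ as the vertices of $P_n$ in path order (with $v_1$ the other bridge endpoint). A routine distance calculation gives $e(u_1)=n$, $e(u_i)=n+1$ for $i\geq 2$, $e(v_n)=n+1$, and $e(v_j)=\max(j+1,n-j)$ for $1\leq j\leq n-1$. Setting $p=\lceil(n-1)/2\rceil$ and $q=\lfloor(n-1)/2\rfloor$, a direct enumeration from the definition of $\epsilon$ shows that the nonzero entries (up to symmetry) are exactly $\epsilon_{u_1,v_n}=n$, $\epsilon_{u_i,v_j}=j+1$ for $i\geq 2$ and $p\leq j\leq n$, and $\epsilon_{v_j,v_n}=n-j$ for $1\leq j\leq q$.

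To bound the rank by $4$, I will verify that every column of $\epsilon(L_{m,n})$ lies in the span of the four vectors $\mathbf{1}_{\{u_2,\ldots,u_m\}}$, $e_{v_n}$, $\epsilon(L_{m,n})e_{u_2}$, and $\epsilon(L_{m,n})e_{v_n}$. Indeed, the columns indexed by $u_2,\ldots,u_m$ all coincide; the columns $v_j$ with $p\leq j\leq n-1$ and $j>q$ are scalar multiples of $\mathbf{1}_{\{u_2,\ldots,u_m\}}$; the column $u_1$ and the columns $v_j$ with $1\leq j\leq q$ and $j<p$ are scalar multiples of $e_{v_n}$; and when $n$ is odd, the single borderline column $v_{(n-1)/2}$ is a sum of a multiple of $\mathbf{1}_{\{u_2,\ldots,u_m\}}$ and a multiple of $e_{v_n}$. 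Evaluating the four spanning vectors at the coordinates $u_1$, $u_2$, $v_p$, $v_n$ produces an essentially triangular pattern with nonzero diagonal, which proves their linear independence. Hence the rank equals $4$ and the nullity equals $m+n-4$.

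For the inertia, consider the $4\times 4$ principal submatrix $M$ of $\epsilon(L_{m,n})$ on $\{u_1,u_2,v_p,v_n\}$. A brief case split on the parity of $n$ (which determines whether $\epsilon_{v_p,v_n}$ equals $0$ or $p+1$) followed by Laplace expansion along the first column yields $\det(M)=n^2(p+1)^2>0$, while $\mathrm{tr}(M)=0$. The four eigenvalues of $M$ therefore sum to zero and have positive product, which rules out every sign pattern except two positive and two negative, so $M$ has inertia $(2,2,0)$. Applying the interlacing theorem to the pair $M\subseteq\epsilon(L_{m,n})$ gives at least two negative and at least two positive eigenvalues of $\epsilon(L_{m,n})$; combined with the rank bound, the inertia must be exactly $(2,2,m+n-4)$. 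The main delicate point is the parity case analysis, since when $n$ is odd the coincidence $p=q$ creates the borderline column $v_{(n-1)/2}$, which carries entries of both types and requires separate attention in both the rank argument and the determinant computation.
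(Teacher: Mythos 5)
Your proof is correct, and it follows the same overall skeleton as the paper's (establish that the rank is $4$, then use interlacing with a principal submatrix to fix the signs of the four nonzero eigenvalues), but the two steps are executed differently. For the rank, the paper simply writes down the block form $\left[\begin{smallmatrix}0 & A\\ A^{t} & 0\end{smallmatrix}\right]$ in two parity cases and asserts that the rank is $4$ is ``easy to see''; your explicit four-vector spanning set together with the evaluation at the coordinates $u_1,u_2,v_p,v_n$ actually supplies the verification the paper omits, including the borderline column that appears only for odd $n$. For the signs, the paper goes ``downward'': it deletes the last row and column to get an $(m+n-1)\times(m+n-1)$ principal submatrix of rank $2$ with inertia $(1,1,m+n-3)$, and rules out the splits $(3,1)$ and $(1,3)$ by contradiction via interlacing. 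You go ``upward'': you isolate a nonsingular $4\times4$ principal submatrix $M$ with $\det M=n^2(p+1)^2>0$ and $\operatorname{tr}M=0$, deduce that $M$ has inertia $(2,2,0)$, and push this up by interlacing to get at least two eigenvalues of each sign, which the rank bound then makes exact. Your version is slightly more robust in that the determinant computation is insensitive to the parity of $n$ (the entry $\epsilon_{v_p,v_n}$ never enters the Laplace expansion), and it avoids the case analysis by contradiction; the paper's version avoids computing any determinant. Both arguments are valid.
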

\begin{proof} Let $K_m$ be the complete graph on $m$ vertices with vertex set $\{1,\hdots,m\}$, and let $P_n$ be the path on $n$ vertices with vertex set $\{m+1,\hdots,m+n\}$. Form the $(m,n)$- lollipop graph by joining vertex $m$ of $K_m$ with the vertex $m+1$ of $P_n$.
	
	\textbf{Case(I):} Let $n$ be an even integer, say $n=2k$. Then, the eccentricity matrix of $L_{m,n}$ is given by
	\[
	\epsilon(L_{m,n})=
	\left[ {\begin{array}{cc}
		0_{m\times m}& A_{m\times 2k} \\
		{A^t}_{2k \times m} & 0_{2k \times 2k} \\
		\end{array} } \right],
	\]
	where, $$A=
	\begin{cases}
	\text{$j+1$} & \quad\text{if $j=k,k+1,\hdots,2k~\mbox{and}~i=1,2,\hdots,m-1$,}\\
	\text{$j$}& \quad\text{if $j=2k~\mbox{and}~i=m$,}\\
	\text{0} & \quad\text{otherwise.}\\
	\end{cases}$$
	It is easy to see that, $\epsilon(L_{m,n})$  has rank $4$.
	
	\textbf{Case(II)} Let $n$ be an odd integer, say $n=2k+1$. Then, the eccentricity matrix of $L_{m,n}$ is  given by
	\[
	\epsilon(L_{m,n})=
	\left[ {\begin{array}{cc}
		0_{m\times m}& B_{m\times 2k+1} \\
		{B^t}_{2k+1 \times m} & 0_{2k+1 \times 2k+1} \\
		\end{array} } \right],
	\]
	where, $$B=
	\begin{cases}
	\text{$j+1$} & \quad\text{if $j=k,k+1,\hdots,2k+1 ~\mbox{and}~i=1,2,\hdots,m-1$,}\\
	\text{$j$}& \quad\text{if $j=2k+1~\mbox{and}~i=m$,}\\
	\text{0} & \quad\text{otherwise.}\\
	\end{cases}$$
	
	It is clear that, the rank of $\epsilon(L_{m,n})$ is $4$.
	
	Therefore, in both cases, the multiplicity of  zero as an eigenvalue of  $\epsilon(L_{m,n})$ is  $m+n-4$.
	
	Let $C$ denote the $(m+n-1)\times (m+n-1)$ principal submatrix of $\epsilon(L_{m,n})$ obtained by deleting the  $(m+n)^{th}$ row and the $(m+n)^{th}$ column. Therefore, $C=
	\left[ {\begin{array}{cc}
		0& D \\
		D^t & 0 \\
		\end{array} } \right]$, where $D$ is an $m\times n-1$ matrix with rank $1$,  and therefore $rank\big(C\big)=2$. Since $tr\big(C\big)=0$, it has one positive eigenvalue and one negative eigenvalue.
	Since, $tr(\epsilon(L_{m,n}))=0$,$\epsilon(L_{m,n})$ has at least one  positive and at least one negative eigenvalue. We claim  that $\epsilon(L_{m,n})$ has exactly two positive eigenvalues and  two negative eigenvalues.
	
	Suppose not. Then, without loss generality,  the matrix  $\epsilon(L_{m,n})$  has $3$ negative eigenvalues and $1$ positive eigenvalue. Then, by interlacing theorem, the $(m+n-1)\times (m+n-1)$ principal submatrix $C$ has two negative eigenvalues and one positive eigenvalue, which is not true.
	Similarly, if $\epsilon(L_{m,n})$ has $3$ positive eigenvalues and $1$ negative eigenvalue, then,  again by interlacing theorem, the $(m+n-1)\times (m+n-1)$ principal submatrix $B$ has two negative eigenvalues and one positive eigenvalue. Hence, the inertia of $\epsilon(L_{m,n})$ $(n \geq 2)$ is $(2,2,m+n-4)$.
\end{proof}

In the next theorem, we compute the inertia of the path graph.
\begin{thm}
	Let $P_n$ be the path on $n$ vertices. Then, the inertia of eccentricity matrix  $\epsilon(P_n)$ is $(2,2,n-4)$.
\end{thm}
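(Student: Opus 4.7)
The plan is to imitate the proof of the preceding theorem for lollipop graphs. The key structural observation is that in $P_n$, with vertices $v_1,\ldots,v_n$ labelled along the path, the eccentricity $e(v_i)=\max(i-1,n-i)$ is realized only at the endpoints $v_1$ and $v_n$. Consequently, $\epsilon(P_n)_{ij}\neq 0$ forces at least one of $i,j$ to lie in $\{1,n\}$, so every nonzero entry of $\epsilon(P_n)$ sits in the first or last row and column. A direct case check (with a mild bifurcation depending on the parity of $n$, handled as in the previous proof) gives $\epsilon(P_n)_{1j}=j-1$ for $j\geq \lceil (n+1)/2\rceil$ and $\epsilon(P_n)_{nj}=n-j$ for $j\leq \lfloor (n+1)/2\rfloor$, with all other off-diagonal entries in rows/columns $\{2,\ldots,n-1\}$ equal to zero.

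From this structure, $\operatorname{rank}(\epsilon(P_n))=4$: every row lies in the span of row $1$, row $n$, $e_1^T$, and $e_n^T$, so the rank is at most $4$, while four linearly independent columns (for instance columns $1$, $n$, $2$, and $n-1$) show that the rank is at least $4$. Consequently $n_0(\epsilon(P_n))=n-4$. Next, let $C$ denote the $(n-1)\times(n-1)$ principal submatrix of $\epsilon(P_n)$ obtained by deleting the last row and last column. Deleting the index $n$ wipes out every nonzero entry outside the first row and first column of $C$, so $C$ has a $2\times 2$ block form with a zero scalar in the top-left, a nonzero row vector $a^T$ in the top-right, its transpose $a$ in the bottom-left, and the $(n-2)\times(n-2)$ zero matrix in the bottom-right. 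Hence $\operatorname{rank}(C)=2$, its nonzero eigenvalues are $\pm\|a\|$, and its inertia is $(1,1,n-3)$.

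Since $\operatorname{tr}(\epsilon(P_n))=0$ and $\epsilon(P_n)$ has exactly four nonzero eigenvalues, its inertia is one of $(3,1,n-4)$, $(2,2,n-4)$, or $(1,3,n-4)$. I would rule out the two extremes by the interlacing theorem applied to $C$, exactly as in the proof of the previous theorem: an inertia of $(3,1,n-4)$ would force the positive eigenvalue $\lambda_{n-2}$ of $\epsilon(P_n)$ to satisfy $\lambda_{n-2}\leq \beta_{n-2}=0$, while an inertia of $(1,3,n-4)$ would force the negative eigenvalue $\lambda_3$ to satisfy $\lambda_3\geq \beta_2=0$; both are contradictions. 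The only remaining inertia is $(2,2,n-4)$. The main technical obstacle is the structural first step---verifying that every internal vertex of $P_n$ has its eccentric vertices inside $\{v_1,v_n\}$ and recording the precise values in both parity cases---after which the rank computation and the interlacing argument proceed by the same template as the lollipop proof.
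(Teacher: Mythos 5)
Your proof is correct, and while it follows the same overall template as the paper (establish $\operatorname{rank}=4$, then pin down the signs of the four nonzero eigenvalues by interlacing), it diverges at the decisive step. The paper interlaces with the $3\times 3$ principal submatrix indexed by $\{1,n-1,n\}$, whose eigenvalues are $\pm\sqrt{(n-1)^2+(n-2)^2}$ and $0$; you instead transplant the lollipop argument and interlace with the $(n-1)\times(n-1)$ principal submatrix $C$ obtained by deleting row and column $n$, which has inertia $(1,1,n-3)$. Your choice is the stronger one: a $3\times 3$ submatrix with inertia $(1,1,1)$ only forces $\epsilon(P_n)$ to have at least one positive and one negative eigenvalue, and the interlacing inequalities $\lambda_k\leq\beta_k\leq\lambda_{k+n-3}$ do not by themselves exclude the inertias $(3,1,n-4)$ or $(1,3,n-4)$; the paper's concluding sentence glosses over this. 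With $m=n-1$ the interlacing is tight enough that $\beta_{n-2}=0$ kills three positive eigenvalues and $\beta_2=0$ kills three negative ones, exactly as you argue, so your write-up actually closes a gap in the paper's own proof. Your justification of $\operatorname{rank}=4$ (rows spanned by row $1$, row $n$, $e_1^T$, $e_n^T$, plus four independent columns) is also more explicit than the paper's ``clearly.'' Two minor remarks: the statement implicitly requires $n\geq 4$ (for $n=3$ the rank is not $4$), and for odd $n$ the middle vertex contributes a nonzero entry in both row $1$ and row $n$, which your index ranges capture correctly but which does not affect the rank or the block structure of $C$.
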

\begin{proof}
	Let $P_n$ be the path on $n$ vertices with vertex set $\{1,2,\ldots,n\}$, and let $\epsilon(P_n)$ be its eccentricity matrix. Then
	$$\epsilon(P_n)_{ij}=
	\begin{cases}
	\text{$j-1$} & \quad\text{if $i=1 ~\mbox{and}~ j=[\frac{n}{2}]+1,[\frac{n}{2}]+2,\hdots,n$,}\\
	\text{$j-i$}& \quad\text{if $j=n  ~\mbox{and}~ i=2,3,\hdots,[\frac{n}{2}]$,}\\
	\text{0} & \quad\text{otherwise,}\\
	\end{cases}$$
	where $[\frac{n}{2}]$ denotes the greatest integer not greater than $\frac{n}{2}$. Clearly, the rank of the eccentricity matrix $\epsilon(P_n)$ is $4$, and hence the algebraic multiplicity of $0$, as an eigenvalue of $\epsilon(P_n)$, is $n-4$. Now,  consider the $3\times 3$ principal submatrix $B$ of $\epsilon(P_n)$ obtained by taking $1^{st}$, $(n-1)^{th}$, and $n^{th}$ rows and columns of $\epsilon(P_n)$. Then
	\[ B=
	\left[ {\begin{array}{ccc}
		0 & n-2 &n-1 \\
		n-2 & 0&0 \\
		n-1 &0 &0\\
		\end{array} } \right],
	\]
	and the eigenvalues of $B$ are $- \sqrt{(n-1)^2+(n-2)^2},0, \sqrt{(n-1)^2+(n-2)^2}$. Therefore, by interlacing theorem,  $\epsilon(P_n)$ has two positive and two negative eigenvalues. Thus, the inertia of $\epsilon(P_n)$ is $(2,2,n-4)$.
\end{proof}

\textbf{Acknowledgement:} Iswar Mahato and  M. Rajesh Kannan would like to thank  Department of Science and Technology, India, for financial support through the Early Carrier Research Award (ECR/2017/000643).

\bibliographystyle{plain}
\bibliography{ecc-bib}

\end{document}